\newcommand{\id}{\operatorname{id}}
   \theoremstyle{plain}
   \newtheorem{thm}{Theorem}[section]
   \newtheorem{prop}[thm]{Proposition}
   \newtheorem{lem}[thm]{Lemma}
   \newtheorem{cor}[thm]{Corollary}
   \theoremstyle{definition}
   \newtheorem{defn}[thm]{Definition}
   \newtheorem{example}[thm]{Example}
   \theoremstyle{remark}
 \numberwithin{equation}{section}
\author{V. Manuilov}
\date{}
\address{Moscow Center for Fundamental and Applied Mathematics {\rm and} Moscow State University,
Leninskie Gory 1, Moscow, 
119991, Russia}
\email{manuilov@mech.math.msu.su}
\thanks{The author acknowledges partial support by the RFBR grant No. 18-01-00398}
\title{Metrics on doubles as an inverse semigroup}
\begin{document}

\begin{abstract}
For a metric space $X$ we study metrics on the two copies of $X$. We define composition of such metrics and show that the equivalence classes of metrics are a semigroup $M(X)$. Our main result is that $M(X)$ is an inverse semigroup. Therefore, one can define the $C^*$-algebra of this inverse semigroup, which is not necessarily commutative. If the Gromov--Hausdorff distance between two metric spaces, $X$ and $Y$, is finite then their inverse semigroups $M(X)$ and $M(Y)$ (and hence their $C^*$-algebras) are isomorphic. We characterize the metrics that are idempotents, and give examples of metric spaces for which the semigroup $M(X)$ (and the corresponding $C^*$-algebra) is commutative. We also describe the class of metrics determined by subsets of $X$ in terms of the closures of the subsets in the Higson corona of $X$ and the class of invertible metrics.

\end{abstract}

\maketitle

\section*{Introduction}

Given metric spaces $X$ and $Y$, a metric $d$ on $X\sqcup Y$ that extends the metrics on $X$ and $Y$ depends only on the values of $d(x,y)$, $x\in X$, $y\in Y$, but it may be hard to check which functions $d:X\times Y\to (0,\infty)$ determine a metric on $X\sqcup Y$: one has to check the triangle inequality too many times. The problem of description of all such extended metrics is difficult due to the lack of a nice algebraic structure on the set of metrics. 
It was a surprise for us to discover that in the case $Y=X$, there is a nice algebraic structure on the set $M(X)$ of quasi-isometry classes of extended metrics on the double $X\sqcup X$: it is an inverse semigroup. 

Recall that a semigroup $S$ is an inverse semigroup if for any $u\in S$ there exists a unique $v\in S$ such that $u=uvu$ and $v=vuv$ \cite{Lawson}. Philosophically, inverse semigroups describe local symmetries in a similar way as groups describe global symmetries, and technically, the construction of the (reduced) group $C^*$-algebra of a group generalizes to that of the (reduced) inverse semigroup $C^*$-algebra \cite{Paterson}. 

Thus, one can associate a new (noncommutative) $C^*$-algebra to any metric space. In particular, all quasi-isometry classes of metrics on the double of $X$ are partial isometries. We characterize the metrics that are idempotents in $M(X)$ and show that any two idempotents commute (which proves that $M(X)$ is an inverse semigroup). We show that if the Gromov--Hausdorff distance between two metric spaces, $X$ and $Y$, is finite then their inverse semigroups $M(X)$ and $M(Y)$ (and hence the corresponding $C^*$-algebras) are isomorphic. We also describe the class of metrics determined by subsets of $X$ in terms of the closures of the subsets in the Higson corona of $X$ and the class of invertible metrics, and give examples of metric spaces for which the semigroup $M(X)$ is commutative. 

\bigskip

Let $X=(X,d_X)$ be a metric space. 

\begin{defn}
A {\it double} of $X$ is a metric space $X\times\{0,1\}$ with a metric $d$ such that 
\begin{itemize}
\item
the restriction of $d$ on each copy of $X$ in
$X\times\{0,1\}$ equals $d_X$; 
\item
the distance between the two copies of $X$ is non-zero.
\end{itemize}

Let $\mathcal M(X)$ denote the set of all such metrics.

\end{defn}

We identify $X$ with $X\times\{0\}$, and write $X'$ for $X\times\{1\}$. Similarly, we write 
$x$ for $(x,0)$ and $x'$ for $(x,1)$, $x\in X$. 
Note that metrics on a double of $X$ may differ only when two points lie in different copies of $X$.
To define a metric $d$ in $\mathcal M(X)$ it suffices to define $d(x,y')$ for all $x,y\in X$. 

Recall that two metrics, $d_1$, $d_2$, on the double of $X$ are quasi-isometric if there exist $\alpha>0,\beta\geq 1$ such that 
$$
-\alpha+\frac{1}{\beta} d_1(x,y')\leq d_2(x,y')\leq \alpha+\beta d_1(x,y')
$$
for any $x,y\in X$. 
We call two metrics, $d_1$ and $d_2$, on the double of $X$ equivalent if they are quasi-isometric. In this case we write
$d_1\sim d_2$, or $[d_1]=[d_2]$.

\section{Composition of metrics} 

The idea to consider metrics on the disjoint union of two spaces as morphisms from one space to another was suggested in \cite{Manuilov-Morphisms}.

\begin{lem}\label{Lemma_d}
Let $(X,d_X)$, $(Y,d_Y)$ and $(Z,d_Z)$ be metric spaces, let $d$ be a metric on $X\sqcup Y$, $\rho$ a metric on $Y\sqcup Z$ such that $d|_X=d_X$, $d|_Y=\rho|_Y=d_Y$, $\rho|_Z=d_Z$. Then the formula
$$
b(x,z)=\inf_{y\in Y}[d(x,y)+\rho(y,z)],\quad x\in X,z\in Z,
$$ 
defines a metric on $X\sqcup Z$.

\end{lem}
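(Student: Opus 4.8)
The statement asks us to verify that $b(x,z)=\inf_{y\in Y}[d(x,y)+\rho(y,z)]$ is a metric on $X\sqcup Z$, so the plan is simply to check the defining properties of a metric one at a time, keeping in mind that the interesting content is entirely in the triangle inequality across the gluing, since $b$ restricts to $d_X$ on $X$ and to $d_Z$ on $Z$ by hypothesis. First I would record that $b$ takes finite positive values: finiteness follows because for any fixed $y_0\in Y$ we have $b(x,z)\le d(x,y_0)+\rho(y_0,z)<\infty$, and strict positivity follows from the fact that the distance between the two copies in a double is non-zero — more precisely, if $\delta_1>0$ is a lower bound for $d$ between $X$ and $Y$ and $\delta_2>0$ a lower bound for $\rho$ between $Y$ and $Z$, then $b(x,z)\ge\delta_1+\delta_2>0$ for all $x\in X$, $z\in Z$ (and for points within the same copy $b$ is already a genuine metric, hence $b(p,p)=0$ and $b(p,q)>0$ for $p\ne q$). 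Symmetry is immediate from symmetry of $d$ and $\rho$.

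The heart of the argument is the triangle inequality, and here I would split into cases according to how many of the three points lie in $X$ versus $Z$. The cases with all three points in the same copy are just the triangle inequality for $d_X$ or $d_Z$. The genuinely new cases are the mixed ones, e.g. $x_1,x_2\in X$ and $z\in Z$, where we must show $b(x_1,z)\le d_X(x_1,x_2)+b(x_2,z)$: given $\varepsilon>0$, pick $y\in Y$ with $d(x_2,y)+\rho(y,z)\le b(x_2,z)+\varepsilon$, and then estimate $b(x_1,z)\le d(x_1,y)+\rho(y,z)\le d(x_1,x_2)+d(x_2,y)+\rho(y,z)\le d_X(x_1,x_2)+b(x_2,z)+\varepsilon$, using the triangle inequality for $d$ on $X\sqcup Y$; let $\varepsilon\to0$. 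The symmetric case with two points in $Z$ and one in $X$ is handled the same way using $\rho$. The remaining configuration — $x\in X$ and $z_1,z_2\in Z$ with, say, the middle vertex being $x$, giving $b(z_1,z_2)=d_Z(z_1,z_2)\le b(z_1,x)+b(x,z_2)$ — requires combining two infima: choose $y_1$ nearly achieving $b(x,z_1)$ and $y_2$ nearly achieving $b(x,z_2)$, and then bound $d_Z(z_1,z_2)\le\rho(z_1,y_1)+d_Y(y_1,y_2)+\rho(y_2,z_2)\le\rho(z_1,y_1)+d(y_1,x)+d(x,y_2)+\rho(y_2,z_2)$, where I used the triangle inequality for $d_Y=\rho|_Y=d|_Y$ to pass through the point $x$ via $d_Y(y_1,y_2)\le d(y_1,x)+d(x,y_2)$ — wait, this last step needs $d(y_1,x)+d(x,y_2)\ge d(y_1,y_2)=d_Y(y_1,y_2)$, which is exactly the triangle inequality for $d$ on $X\sqcup Y$. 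Letting the two $\varepsilon$'s go to zero gives the claim.

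The one subtlety, and the step I expect to require the most care, is the configuration with $x\in X$ as the \emph{middle} vertex and $z_1,z_2\in Z$ as endpoints: this is the only case where the path from $z_1$ to $z_2$ realizing (approximately) $b(z_1,\cdot)+b(\cdot,z_2)$ passes through $Y$ \emph{twice}, at two a priori different points $y_1,y_2\in Y$, and one must use the metric $d_Y$ on $Y$ itself (which is the common restriction of $d$ and $\rho$) to shortcut between them. One should also double-check there is no issue with the infimum being attained — it need not be, which is why all estimates are done with an auxiliary $\varepsilon>0$ and a limiting argument, never assuming a minimizer exists. Finally, one verifies $b(x,z)=b(z,x)$ is consistent with the definition; since the roles of $X$ and $Z$ are symmetric in the formula (up to relabeling $d\leftrightarrow\rho$ and reversing arguments), no separate work is needed. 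Assembling these cases completes the verification that $b$ is a metric on $X\sqcup Z$.
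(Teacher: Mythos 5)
Your proof is correct and follows essentially the same route as the paper's: verify the triangle inequality case by case using $\varepsilon$-approximate minimizers in $Y$, with the only delicate configuration being the triangle whose middle vertex lies in the opposite copy, handled with two near-minimizers $y_1,y_2$ and a shortcut through $d_Y$ (the paper does exactly this for $x_1,x_2\in X$, $z\in Z$ and invokes symmetry for the rest). Your extra check of strict positivity of $b$ is a sensible supplement the paper omits; note that it uses that the distances between $X$ and $Y$ and between $Y$ and $Z$ are bounded away from zero, which is not literally in the lemma's hypotheses but does hold in the paper's application to doubles.
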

\begin{proof}
Due to symmetry, it suffices to check the triangle inequality for the triangle $(x_1,x_2,z)$, $x_1,x_2\in X$, $z\in Z$. Fix $\varepsilon>0$ and let 
$y_1,y_2\in Y$ satisfy 
$$
d(x_1,y_1)+\rho(y_1,z)-b(x_1,z)<\varepsilon;\quad d(x_2,y_2)+\rho(y_2,z)-b(x_2,z)<\varepsilon.
$$ 
Then 
\begin{eqnarray*}
d_X(x_1,x_2)&\leq& d(x_1,y_1)+\rho(y_1,z)+\rho(z,y_2)+d(y_2,x_2)\leq b(x_1,z)+b(x_2,z)+2\varepsilon;\\
b(x_2,z)&\leq&d(x_2,y_1)+\rho(y_1,z)\leq d_X(x_2,x_1)+d(x_1,y_1)+\rho(y_1,z)\\
&\leq&d_X(x_2,x_1)+b(x_1,z)+\varepsilon.
\end{eqnarray*}
Taking $\varepsilon$ arbitrarily small, we obtain the triangle inequality.

\end{proof}

We shall denote the metric $b$ by $\rho\circ d$, or $\rho d$.

\begin{cor}
Let $\rho,d$ be metrics on the double of $X$. Then the formula
$$
\rho d(x,z')=\inf_{y\in X}[d(x,y')+\rho(y,z')],\quad x,z\in X,
$$ 
defines the composition of $d$ and $\rho$ on the double of $X$.

\end{cor}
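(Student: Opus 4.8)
The statement is essentially a specialization of Lemma~\ref{Lemma_d}, obtained by relabelling the copies of $X$ occurring in the two doubles; so the plan is to set that relabelling up carefully, check that the compatibility hypotheses of the Lemma hold, and then supply the one small point the Lemma does not literally give.

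First I would introduce three copies of $X$, each carrying the metric $d_X$, to play the roles of the Lemma's spaces $X$, $Y$, $Z$. I take the Lemma's $X$ to be $X\times\{0\}$, its $Z$ to be $X\times\{1\}$ (these will be the two copies of the output double), and its $Y$ to be an auxiliary copy of $X$ that is simultaneously the ``target'' copy of $d$ and the ``source'' copy of $\rho$. Accordingly, the Lemma's metric ``$d$'' is our $d$, viewed as a metric on $X\times\{0\}\sqcup X\times\{1\}$ with $X\times\{1\}$ now playing the role of the middle space $Y$; and the Lemma's metric ``$\rho$'' is our $\rho$, viewed as a metric on $X\times\{0\}\sqcup X\times\{1\}$, but with $X\times\{0\}$ playing the role of $Y$ and $X\times\{1\}$ the role of $Z$. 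Thus ``$d(x,y)$'' for $x$ in the Lemma's $X$ and $y$ in the Lemma's $Y$ is our $d(x,y')$, while ``$\rho(y,z)$'' for $y$ in the Lemma's $Y$ and $z$ in the Lemma's $Z$ is our $\rho(y,z')$. The four compatibility conditions of Lemma~\ref{Lemma_d} ($d|_X=d_X$, $d|_Y=\rho|_Y=d_Y$, $\rho|_Z=d_Z$) then hold automatically, since by the definition of a double the restriction of either $d$ or $\rho$ to any single copy of $X$ is $d_X$.

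Lemma~\ref{Lemma_d} now gives that
$$
\rho d(x,z')=\inf_{y\in X}\bigl[d(x,y')+\rho(y,z')\bigr]
$$
is a well-defined symmetric function on $X\times\{0\}\sqcup X\times\{1\}$, finite (take $y=z$ to get $\rho d(x,z')\le d(x,z')+\rho(z,z')$), satisfying the triangle inequality, and restricting to $d_X$ on each copy. The only remaining point, not supplied directly by the Lemma, is the second clause in the definition of a double: the two copies must be at positive distance. Since $d,\rho\in\mathcal M(X)$ the numbers $\delta_d=\inf\{d(u,v'):u,v\in X\}$ and $\delta_\rho=\inf\{\rho(u,v'):u,v\in X\}$ are strictly positive, and for every $y\in X$ one has $d(x,y')+\rho(y,z')\ge\delta_d+\delta_\rho$, so $\rho d(x,z')\ge\delta_d+\delta_\rho>0$ for all $x,z\in X$; hence $\rho d\in\mathcal M(X)$. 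I do not expect a genuine obstacle in any of this: the content is pure bookkeeping about which copy of $X$ is glued to which, and the only step that goes beyond quoting Lemma~\ref{Lemma_d} is the positivity of the separation between the copies, which is what I would be careful to state explicitly.
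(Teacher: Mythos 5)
Your proposal is correct and follows exactly the route the paper intends: the corollary is the specialization of Lemma~\ref{Lemma_d} to the case where all three spaces are copies of $X$, with the middle copy playing the role of the second copy for $d$ and the first copy for $\rho$. Your additional verification that $\rho d(x,z')\ge\delta_d+\delta_\rho>0$, so that the two copies remain at positive distance and $\rho d$ genuinely lies in $\mathcal M(X)$, is a point the paper leaves implicit and is worth making explicit.
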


\begin{lem}
The composition of metrics is associative.

\end{lem}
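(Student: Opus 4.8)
The plan is to show that for any three metrics $d,\rho,\sigma$ on the double of $X$ one has $(\sigma\rho)d=\sigma(\rho d)$. By the preceding Corollary (which specialises Lemma~\ref{Lemma_d}), each of $\rho d$, $\sigma\rho$, $(\sigma\rho)d$, $\sigma(\rho d)$ is again a metric in $\mathcal M(X)$, and two metrics on the double of $X$ coincide as soon as they agree on all pairs $(x,w')$ with $x,w\in X$. So it suffices to check that $((\sigma\rho)d)(x,w')=(\sigma(\rho d))(x,w')$ for all $x,w\in X$.

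First I would unfold the left-hand side directly from the definition of composition:
$$
((\sigma\rho)d)(x,w')=\inf_{y\in X}\bigl[d(x,y')+(\sigma\rho)(y,w')\bigr]=\inf_{y\in X}\Bigl[d(x,y')+\inf_{z\in X}\bigl(\rho(y,z')+\sigma(z,w')\bigr)\Bigr].
$$
The key step is that for fixed $x,y$ the quantity $d(x,y')$ does not depend on $z$, so it may be moved inside the inner infimum; then the two nested infima over $y$ and over $z$ may be merged into a single infimum over $(y,z)\in X\times X$. All terms are nonnegative, so everything is bounded below and these elementary manipulations are valid. This yields
$$
((\sigma\rho)d)(x,w')=\inf_{y,z\in X}\bigl[d(x,y')+\rho(y,z')+\sigma(z,w')\bigr].
$$

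Then I would run the symmetric computation on the right-hand side: expanding $\sigma(\rho d)$ and using that $\sigma(z,w')$ is independent of $y$, one obtains exactly the same symmetric expression $\inf_{y,z\in X}\bigl[d(x,y')+\rho(y,z')+\sigma(z,w')\bigr]$, so the two sides agree. There is essentially no serious obstacle here; the only point requiring care is the bookkeeping — keeping straight which metric is ``applied first'' in $\sigma\rho$, and making sure that each time infima are interchanged the term pulled out really is constant with respect to the index being optimised. (If one wished, the same argument shows more generally that an $n$-fold composition equals the infimum over $(y_1,\dots,y_{n-1})\in X^{n-1}$ of the sum of the $n$ consecutive cross-copy distances, which makes associativity transparent.)
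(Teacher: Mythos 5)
Your argument is correct and is precisely the computation the paper has in mind when it dismisses the proof as ``Obvious'': both composites reduce to the single infimum $\inf_{y,z\in X}[d(x,y')+\rho(y,z')+\sigma(z,w')]$, and the interchange of infima is justified since all terms are nonnegative. Nothing further is needed.
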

\begin{proof}
Obvious.

\end{proof}

\begin{lem}
If $d_1\sim \tilde d_1$ and $d_2\sim \tilde d_2$ then $\tilde d_1\circ \tilde d_2\sim d_1\circ d_2$.

\end{lem}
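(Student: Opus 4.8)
The plan is to take the affine inequalities that witness the two given equivalences and feed them into the infimum defining the composition; since those inequalities are monotone in the distance, they survive the infimum. Concretely, pick $\alpha_1>0,\beta_1\geq 1$ with $-\alpha_1+\frac{1}{\beta_1}d_1(u,v')\leq\tilde d_1(u,v')\leq\alpha_1+\beta_1 d_1(u,v')$ for all $u,v\in X$, and $\alpha_2>0,\beta_2\geq 1$ doing the same for $d_2,\tilde d_2$. Set $\alpha=\alpha_1+\alpha_2$ and $\beta=\max\{\beta_1,\beta_2\}$; I will show these constants witness $\tilde d_1\circ\tilde d_2\sim d_1\circ d_2$, where by definition of the composition $d_1 d_2(x,z')=\inf_{y\in X}[d_2(x,y')+d_1(y,z')]$ and likewise for $\tilde d_1\tilde d_2$.

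For the upper bound, fix $x,z\in X$. For every $y\in X$ we have $\tilde d_2(x,y')\leq\alpha_2+\beta\, d_2(x,y')$ and $\tilde d_1(y,z')\leq\alpha_1+\beta\, d_1(y,z')$ (enlarging $\beta_i$ to $\beta$ only weakens the bound), so $\tilde d_2(x,y')+\tilde d_1(y,z')\leq\alpha+\beta[d_2(x,y')+d_1(y,z')]$. Taking $\inf_y$ on both sides and using that $t\mapsto\alpha+\beta t$ commutes with the infimum gives $\tilde d_1\tilde d_2(x,z')\leq\alpha+\beta\, d_1 d_2(x,z')$.

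For the lower bound, fix $x,z$ again. For every $y\in X$, using $\beta\geq\beta_i$ and nonnegativity of distances, $\tilde d_2(x,y')\geq-\alpha_2+\frac1\beta d_2(x,y')$ and $\tilde d_1(y,z')\geq-\alpha_1+\frac1\beta d_1(y,z')$, hence $\tilde d_2(x,y')+\tilde d_1(y,z')\geq-\alpha+\frac1\beta[d_2(x,y')+d_1(y,z')]\geq-\alpha+\frac1\beta\, d_1 d_2(x,z')$, the last step replacing the bracket by its infimum over $y$. The right-hand side is independent of $y$, so taking $\inf_y$ on the left yields $\tilde d_1\tilde d_2(x,z')\geq-\alpha+\frac1\beta\, d_1 d_2(x,z')$. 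Combined with the previous paragraph, this is precisely the quasi-isometry relation $\tilde d_1\circ\tilde d_2\sim d_1\circ d_2$.

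There is essentially no obstacle here; the only point requiring care is the direction in which the infimum is handled. For the upper estimate one pulls the increasing map $\alpha+\beta(\cdot)$ straight through $\inf_y$, whereas for the lower estimate one must first bound the $y$-dependent bracket below by the composed distance $d_1 d_2(x,z')$ and only then take the infimum over $y$. Passing to the common constants $\alpha=\alpha_1+\alpha_2$ and $\beta=\max\{\beta_1,\beta_2\}$ at the outset keeps everything symmetric and avoids juggling four parameters.
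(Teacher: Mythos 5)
Your proof is correct and follows essentially the same route as the paper's: substitute the affine bounds witnessing $d_i\sim\tilde d_i$ into the infimum defining the composition and pass the resulting affine map through the infimum (the paper carries out only the upper bound and notes the lower bound is similar, while you spell out the lower bound and the care needed with the direction of the infimum). The explicit choice $\alpha=\alpha_1+\alpha_2$, $\beta=\max\{\beta_1,\beta_2\}$ is a harmless refinement of the paper's single pair of constants.
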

\begin{proof}
Suppose that there exist $\alpha\geq 0$, $\beta\geq 1$ such that $\tilde d_1(x,y')\leq\alpha+\beta d_1(x,y')$ and $\tilde d_2(x,y')\leq\alpha+\beta d_2(x,y')$ for any $x,y\in X$. 

Then 
\begin{eqnarray*}
\tilde d_1\circ \tilde d_2(x,z')&=&\inf_{y\in X}[\tilde d_2(x,y')+\tilde d_1(y,z')]\leq
\inf_{y\in X}[\alpha+\beta d_2(x,y')+\alpha+\beta d_1(y,z')]\\
&\leq&\inf_{y\in X}[2\alpha+\beta(d_2(x,y')+d_1(y,z'))]\leq 2\alpha+\beta d_1\circ d_2(x,z').  
\end{eqnarray*}
Lower bound is similar.

\end{proof}

Thus, the multiplication is well defined on equivalence classes of metrics on the double of $X$.

Denote the set of all equivalence classes of metrics on the double of $X$ by $M(X)=\mathcal M(X)/\sim$. Then $M(X)$ is a semigroup.


\begin{example}
If $X$ is discrete of finite diameter then all metrics on the double of $X$ are equivalent, so $M(X)$ consists of a single element.

\end{example}

\begin{example}\label{I}
Define a metric $I$ on the double of $X$ by $I(x,y')=d_X(x,y)+1$. (The triangle inequality obviously holds.) Note that $I\circ d\sim d\sim d\circ I$ for any metric on the double of $X$, hence 
$[I]$ is the unit element in the semigroup $M(X)$.

\end{example}

For a metric $d$ on the double of $X$ define the adjoint metric in $\mathcal M(X)$ $d^*$ by $d^*(x,y')=d(y,x')$, $x,y\in X$. Then $\ast$ is an involution: $(d^*)^*=d$ and $(d_1\circ d_2)^*=d_2^*\circ d_1^*$,
and it passes to the equivalence classes, making $M(X)$ a semigroup with involution.

A metric $d$ on the double of $X$ is selfadjoint if $d^*\in[d]$. Note that if $d$ is selfadjoint then there exists a metric $\tilde d\in[d]$ such that
$\tilde d^*=\tilde d$. Indeed, we can set $\tilde d(x,y')=\frac{1}{2}(d(x,y')+d(y,x'))$, $x,y\in X$.

The following simple statement from \cite{Manuilov-Morphisms} is the key observation allowing to see metrics as partial isometries.

\begin{prop}\label{inv-semi}
The metrics $d$ and $d\circ d^*\circ d$ are equivalent for any metric $d$ on the double of $X$.

\end{prop}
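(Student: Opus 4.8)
The plan is to prove the two inequalities $d \le \beta\, (d\circ d^*\circ d)$ and $d\circ d^*\circ d \le \beta\, d$ (with suitable additive constants) directly from the definition of composition, using only the triangle inequality and the fact that $d$ restricted to each copy of $X$ is $d_X$.

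First I would unwind the composition. By the Corollary, for $x,z\in X$,
\[
d\circ d^*\circ d\,(x,z') \;=\; \inf_{y,w\in X}\bigl[d(x,y') + d^*(y,w') + d(w,z')\bigr]
\;=\; \inf_{y,w\in X}\bigl[d(x,y') + d(w,y') + d(w,z')\bigr],
\]
using $d^*(y,w') = d(w,y')$. (I should double-check the order of composition against the paper's convention $\rho d(x,z') = \inf_y[d(x,y')+\rho(y,z')]$, and adjust which factor is $d^*$ accordingly; the symmetric role of the two $d$-factors makes this harmless.) For the easy direction, take $w = y$ in the infimum: then the bracket becomes $d(x,y') + d(y,y') + d(y,z')$. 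Since $d(y,y')$ is the distance between the two copies of the point $y$, it is bounded below by the (positive) distance between the copies; more importantly I do not even need a lower bound here — I bound $d\circ d^*\circ d$ from above by $\inf_y[d(x,y') + d(y,y') + d(y,z')]$, and I want to compare this with $d(x,z')$. Actually the cleaner route is: one more specialization won't help directly, so instead I argue $d\circ d^*\circ d\,(x,z') \le d(x,y')+d(y,y')+d(y,z')$ for every $y$; but I want an upper bound by $d(x,z')$ up to constants, which needs the triangle inequality $d(x,z')\le d(x,y')+d(y,z')$ going the wrong way. The correct move is the reverse: show $d\circ d^*\circ d \ge \tfrac{1}{\text{const}} d - \text{const}$ is the trivial side and $d\circ d^*\circ d \le d + \text{const}$ is where one specializes. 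Let me restate.

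For the upper bound $d\circ d^*\circ d \le d$ (no multiplicative constant needed): specialize both infima using a point near $x$ and a point near $z$, or simply note that for any $\varepsilon>0$ there are points realizing $d$ up to $\varepsilon$; but the slick choice is to pick $y$ and $w$ to both be, say, a fixed basepoint — that gives $d(x,y')+d(w,y')+d(w,z')$ with $y=w$: $d(x,y')+d(y,y')+d(y,z')$, which is $\ge d(x,z')$, the wrong direction again. The genuinely correct specialization for the upper bound is to take $w=x$ is not allowed ($w$ ranges over $X$, so yes it is), giving $d(x,y')+d(x,y')+d(x,z')$... still not obviously $\le d(x,z')+$const. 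The honest statement is that the inequality $d\circ d^*\circ d\,(x,z')\le d(x,z')$ does \emph{not} hold pointwise, and the proposition must be proved in the quasi-isometry sense using that all these distances are comparable to $d_X(x,z)$ up to bounded error — indeed any metric $d$ on the double satisfies $d(x,z') \ge |d_X(x,x_0) - d_X(z,x_0)|$-type bounds and, if $X$ has finite diameter, everything collapses; in general one uses $d(x,y')\ge d(x,z') - d_X(z,y)$ hmm. I expect the clean argument is: for the $\ge$ direction, take $y=w$ in the double infimum to get $d\circ d^*\circ d\,(x,z')\le d(x,y')+d(y,y')+d(y,z')$, then choose $y$ cleverly (e.g.\ $y$ on a near-geodesic, or $y=x$) to control $d(y,y')$; for the $\le$ direction, use the triangle inequality $d(x,y')\le d_X(x,w)+d(w,y')$ inside the infimum to extract $d(w,z')$ and a term that telescopes.

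The main obstacle, then, is getting the \emph{additive} constant under control in the direction $d\circ d^*\circ d \ge \tfrac1\beta d - \alpha$: one must produce, for given $x,z$, admissible $y,w$ with $d(x,y')+d(w,y')+d(w,z')$ not much larger than $d(x,z')$, and the only levers available are the triangle inequality and the isometric embedding of each copy of $X$. I would take $w=x$ and $y=z$ (both legal, since $w,y\in X$), obtaining the bound $d(x,z')+d(x,z')+d(x,z')$ — too big — so instead $w=x,\ y$ arbitrary minimized: $\inf_y[d(x,y')+d(x,y')] + d(x,z') = 2\inf_y d(x,y') + d(x,z')$, and I would then separately argue $\inf_y d(x,y')$ is bounded by a constant independent of $x$ (this is where a hypothesis or a standard fact about metrics on doubles enters — e.g.\ $\inf_y d(x,y')\le d(x,x') $, and one shows $x\mapsto d(x,x')$ is within bounded distance of a constant because $|d(x,x')-d(y,y')|\le 2d_X(x,y)$ is false in general, so more care is needed). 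I suspect the paper instead chooses $y$ and $w$ near a point realizing $d(x,z')$ and uses the $1$-Lipschitz dependence of $d(\cdot,\cdot')$ in each variable to close the gap; I would follow that route, and the bounded additive error will come out of two applications of the triangle inequality. I'll write the final argument with explicit $\varepsilon$'s as in Lemma~\ref{Lemma_d}, taking near-optimal $y,w$ and estimating in both directions, expecting $\alpha$ to be $0$ or a small multiple of $\inf_{x}d(x,x')$ and $\beta$ to be $1$.
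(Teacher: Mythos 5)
Your proposal does not close; the failure point is precise and, ironically, you walk right past the correct argument. You compute that the choice $w=x$, $y=z$ in
\[
(d\circ d^*\circ d)(x,z')=\inf_{y,w\in X}\bigl[d(x,y')+d(w,y')+d(w,z')\bigr]
\]
yields the bound $d(x,z')+d(x,z')+d(x,z')=3\,d(x,z')$, and you then reject it as ``too big.'' But it is not too big: the equivalence relation here is quasi-isometry, $-\alpha+\frac{1}{\beta}d_1\leq d_2\leq \alpha+\beta d_1$, so the upper bound $(d\circ d^*\circ d)(x,z')\leq 3\,d(x,z')$ is exactly what is required, with $\alpha=0$ and $\beta=3$. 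This is precisely the specialization the paper uses. Your entire subsequent search for an additive-only bound $d\circ d^*\circ d\leq d+\mathrm{const}$ (via basepoints, near-optimal intermediate points, Lipschitz dependence of $d(\cdot,\cdot')$, etc.) is chasing a stronger statement than the proposition asserts, and none of those threads terminates in an actual estimate. Your closing expectation that $\beta=1$ confirms the misreading.

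The other direction, which you correctly label as easy but never write down, is: $d^*(t,s')=d(s,t')=d(t',s)$, so each term in the infimum satisfies $d(x,t')+d(t',s)+d(s,z')\geq d(x,z')$ by two applications of the triangle inequality inside the single metric space $X\sqcup X'$; hence $(d\circ d^*\circ d)(x,z')\geq d(x,z')$. Combining the two displayed inequalities gives $d\leq d\circ d^*\circ d\leq 3d$, which is the claimed equivalence. So the missing idea is not a technique but a correct reading of the target: once you accept a multiplicative constant, the proof is the two lines you already had in hand.
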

\begin{proof}
Let $x,y\in X$.  
On the one hand, taking $t=y$, $s=x$, we get
$$
(d\circ d^*\circ d)(x,y')=\inf_{t,s\in X}[d(x,t')+d^*(t,s')+d(s,y')]\leq 3d(x,y').
$$
On the other hand, passing to infimum in the triangle inequality, we get
\begin{eqnarray*}
(d\circ d^*\circ d)(x,y')&=&\inf_{t,s\in X}[d(x,t')+d^*(t,s')+d(s,y')]\geq \inf_{t,s\in X}[d(x,t')+d(t',s)+d(s,y')]\\
&\geq& d(x,y').
\end{eqnarray*}

\end{proof}

\begin{cor}
$[d^*d]$ is a selfadjoint idempotent for any metric $d$ on the double of $X$. 

\end{cor}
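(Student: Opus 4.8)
The plan is to verify the two defining properties of the claim — selfadjointness and idempotency of the class $[d^*d]$ — directly, using only the monoid-with-involution structure already established and Proposition~\ref{inv-semi}.

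For selfadjointness, I would not even pass to equivalence classes: using that $\ast$ is an involution and that $(d_1\circ d_2)^*=d_2^*\circ d_1^*$, one computes $(d^*\circ d)^*=d^*\circ(d^*)^*=d^*\circ d$. Hence $d^*d$ is literally fixed by $\ast$, and in particular $[d^*d]^*=[d^*d]$.

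For idempotency, the key input is Proposition~\ref{inv-semi}, which asserts $d\circ d^*\circ d\sim d$. Composing on the left with $d^*$ and invoking the earlier lemma that composition descends to equivalence classes (i.e.\ $d_1\sim\tilde d_1$ and $d_2\sim\tilde d_2$ imply $d_1\circ d_2\sim\tilde d_1\circ\tilde d_2$), one obtains $d^*\circ(d\circ d^*\circ d)\sim d^*\circ d$. By associativity of composition the left-hand side equals $(d^*\circ d)\circ(d^*\circ d)$, so $[d^*d]^2=[d^*d]$.

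Putting the two paragraphs together yields that $[d^*d]$ is a selfadjoint idempotent. I do not expect any genuine obstacle: the statement is a formal consequence of associativity, the involution identities, well-definedness of multiplication on $M(X)$, and Proposition~\ref{inv-semi}. The only point requiring a moment's care is the application of well-definedness when moving from $d\circ d^*\circ d\sim d$ to $d^*\circ d\circ d^*\circ d\sim d^*\circ d$, which is legitimate precisely because multiplication on equivalence classes has already been shown to be well defined.
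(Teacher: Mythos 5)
Your proof is correct and is exactly the argument the paper intends (the paper states this as an immediate corollary of Proposition~\ref{inv-semi} without writing out the details): selfadjointness from $(d^*\circ d)^*=d^*\circ d$ via the involution identities, and idempotency by multiplying $d\circ d^*\circ d\sim d$ by $d^*$ and using associativity together with the well-definedness of composition on equivalence classes. No gaps.
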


Recall that a semigroup $S$ is regular if for any $d\in S$ there is $b\in S$ such that $d=dbd$ and $b=bdb$.

\begin{cor}
$M(X)$ is a regular semigroup.

\end{cor}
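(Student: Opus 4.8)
The plan is to exhibit, for each class $[d]\in M(X)$, an explicit pseudoinverse, and the natural candidate is the class of the adjoint metric $[d^*]$. Proposition \ref{inv-semi} already gives $d\sim d\circ d^*\circ d$, which upon passing to equivalence classes reads $[d]=[d]\,[d^*]\,[d]$ (recall that composition descends to $M(X)$ and that $\ast$ respects the equivalence relation). So one half of the regularity condition is immediate.

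For the other half I would simply apply Proposition \ref{inv-semi} to the metric $d^*$ in place of $d$. Since $(d^*)^*=d$, the proposition yields $d^*\sim d^*\circ (d^*)^*\circ d^*=d^*\circ d\circ d^*$, that is, $[d^*]=[d^*]\,[d]\,[d^*]$. Hence $b=[d^*]$ satisfies both $[d]=[d]\,b\,[d]$ and $b=b\,[d]\,b$, which is precisely the definition of a regular semigroup.

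There is essentially no obstacle here: the corollary is a formal consequence of Proposition \ref{inv-semi} combined with the fact, established just before it, that $\ast$ is an involution compatible with $\sim$ and with composition. The only point worth emphasising is that regularity asserts mere \emph{existence} of a pseudoinverse, not uniqueness; uniqueness --- and hence the inverse semigroup structure promised in the abstract --- is strictly stronger, requires knowing that the idempotents of $M(X)$ commute, and is deferred to a later section.
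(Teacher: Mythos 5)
Your argument is correct and is exactly the paper's: the paper's entire proof is ``Take $b=d^*$,'' which implicitly relies on applying Proposition \ref{inv-semi} to both $d$ and $d^*$ as you spell out. Your remark that regularity only requires existence (not uniqueness) of the pseudoinverse is also consistent with how the paper defers the inverse-semigroup property to the later commutativity-of-idempotents argument.
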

\begin{proof}
Take $b=d^*$.

\end{proof}

There are typically a lot of idempotent metrics, i.e., metrics representing idempotents in $M(X)$ (see Example \ref{idem}) below. This means that, in general, $M(X)$ is not a cancellative semigroup. Indeed, if $d^2\sim d$ then cancellation would imply $d\sim I$.

\begin{example}\label{idem}
Let $X=\mathbb Z$ with the standard metric $d(n,m)=|n-m|$, $n,m\in\mathbb Z$. Set 
$$
d(n,m')=\left\lbrace\begin{array}{cl}n+m+1,&\mbox{if\ }n,m\geq 0;\\|n-m|+1,&\mbox{otherwise.}\end{array}\right.
$$ 
Then it is easy to see that $d^*=d$ and $[d\circ d]=[d]$, while $d$ is not quasi-isometric to $I$.

\end{example}

\section{Idempotents}

Denote by $d(x,X')$ the distance from $x\in X$ in the first copy of $X$ to the second copy $X'$ of $X$ in the double of $X$.

\begin{thm}\label{projections}
Let $d^*=d$ be a metric on the double of $X$. Then $[d^2]=[d]$ if and only if there exist $\alpha\geq 0$, $\beta\geq 1$ such that 
$-\alpha+\frac{1}{\beta} d(x,x')\leq d(x,X')$ for any $x\in X$.

\end{thm}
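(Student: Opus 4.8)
The plan is to analyze the composition $d \circ d$ directly from its defining infimum and compare it to $d$. For one direction, suppose $[d^2] = [d]$. Since $d \circ d(x,y') = \inf_{z \in X}[d(x,z') + d(z,y')]$, specializing to $z = x$ (or $z = y$) and using that $d(x,x') = d(x,X')$ up to the triangle inequality — more precisely $d(x,x') \le d(x,z') + d_X(z,x)$ is not what we want; instead note $d \circ d(x,x') = \inf_z [d(x,z') + d(z,x')] = \inf_z[d(x,z') + d^*(x,z')] = \inf_z[2 d(x,z')]$ does not directly appear, so the cleaner route is: $d \circ d(x,y') \le d(x,x') + d(x,y')$ always (take $z = x$), hence the nontrivial content of $[d^2] = [d]$ is the reverse inequality $d(x,y') \le \alpha + \beta\, d\circ d(x,y')$. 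Apply this with $y = x$: $d(x,x') \le \alpha + \beta\, d\circ d(x,x') = \alpha + \beta \inf_{z}[d(x,z') + d(z,x')]$. Using $d^* = d$, we have $d(z,x') = d(x,z')$, so $d \circ d(x,x') = 2 \inf_z d(x,z') = 2\, d(x,X')$. Therefore $d(x,x') \le \alpha + 2\beta\, d(x,X')$, which is the desired inequality (with $\beta$ replaced by $2\beta$).

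For the converse, assume $-\alpha + \frac{1}{\beta} d(x,x') \le d(x,X')$, i.e. $d(x,x') \le \alpha\beta + \beta\, d(x,X')$, for all $x$. I must show $[d^2] = [d]$, so by Proposition~\ref{inv-semi}-type reasoning it suffices to compare $d \circ d$ with $d$ up to quasi-isometry. The easy half-inequality is $d \circ d(x,y') \le d(x,x') + d(x,y') \le \alpha\beta + \beta\, d(x,X') + d(x,y') \le \alpha\beta + (\beta+1) d(x,y')$, since $d(x,X') \le d(x,y')$. For the other direction, passing to the infimum in the triangle inequality $d(x,y') \le d(x,z') + d(z',z) + ...$ — here the relevant estimate is $d(x,y') \le d(x,z') + d_X(z,\cdot)$? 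No: we want a lower bound on $d \circ d$. Given $z$, triangle inequality in the double gives $d(x,y') \le d(x,z') + d(z',z_0) + d(z_0, y')$ is circular; instead use $d(x,z') + d(z,y') \ge$ ? We need $d \circ d(x,y') \ge \frac1\beta d(x,y') - \alpha$. Observe that for any $z$, by the triangle inequality through the point $z$ (in copy $X'$) and then $z$ (in copy $X$): $d(z,y') \ge d(x,y') - d(x,z') $ is false in general; rather $d(x,y') \le d(x,z') + d_X^{\mathrm{double}}(z',z)$... the clean fact is $d(x,z') + d(z,y') \ge d(x,y')$ would give $d\circ d \ge d$ for free, but that inequality need not hold. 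So the real work is: for each $z$, $d(x,z') + d(z,y') \ge \max\{d(x,x'), \ldots\}$-type bounds combined with $d(z,z') \le d(x,z') + d_X(z,x)$, hmm.

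The honest structure of the converse is: given the hypothesis, show $d(x,y') \le d\circ d(x,y') + (\text{affine error})$. Fix $\varepsilon > 0$ and pick $z$ nearly attaining the infimum defining $d \circ d(x,y')$. Then $d(x,z') + d(z,y') < d\circ d(x,y') + \varepsilon$. Now I bound $d(x,y')$ from above: $d(x,y') \le d(x,z') + d(z',z) + d(z,y')$ where $d(z',z) = d(z,X')$-type? Actually $d(z',z) = d(z,z') \le \alpha\beta + \beta\, d(z,X') \le \alpha\beta + \beta\, d(z,z'')$... this is getting circular because $d(z,X')$ could be attained far away. The resolution: $d(z,X') \le d(z, y')$ and also $d(z,X') \le d(z, x')= d(x,z')$, so $d(z,z') \le \alpha\beta + \beta \min\{d(x,z'), d(z,y')\} \le \alpha\beta + \beta\cdot\frac12(d(x,z')+d(z,y')) < \alpha\beta + \frac\beta2(d\circ d(x,y') + \varepsilon)$. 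Hence $d(x,y') \le d(x,z') + d(z,z') + d(z,y') < (d\circ d(x,y')+\varepsilon) + \alpha\beta + \frac\beta2(d\circ d(x,y')+\varepsilon)$, and letting $\varepsilon \to 0$ gives $d(x,y') \le \alpha\beta + (1 + \frac\beta2)\, d\circ d(x,y')$. Combined with the easy half, $[d^2]=[d]$.

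\textbf{Main obstacle.} The delicate point is the converse, specifically controlling the cross term $d(z,z')$ for the near-optimal intermediate point $z$: one must observe that $d(z,X')$ is bounded by \emph{both} $d(x,z')$ and $d(z,y')$ (via the triangle inequality to the points $x' $ and $ y'$ respectively), so it is bounded by their average, which is itself controlled by $d\circ d(x,y')$. Getting the constants to line up — and making sure the use of $d^* = d$ in the forward direction correctly turns $d\circ d(x,x')$ into $2\,d(x,X')$ — is the only subtlety; everything else is routine manipulation of infima and triangle inequalities.
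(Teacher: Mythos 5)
Your proof is correct. The forward direction (specialize the quasi-isometry lower bound for $d^2$ to $y=x$ and use $d^*=d$ to compute $d\circ d(x,x')=2\inf_z d(x,z')=2\,d(x,X')$) and the easy upper bound $d^2(x,y')\le \alpha\beta+(\beta+1)\,d(x,y')$ in the converse coincide with the paper's argument. For the crucial lower bound on $d\circ d$ you take a different route: you pick a near-minimizing intermediate point $z$, write $d(x,y')\le d(x,z')+d(z,z')+d(z,y')$, and control the cross term $d(z,z')$ by applying the hypothesis at $z$ together with the observation that $d(z,X')\le\min\{d(x,z'),d(z,y')\}\le\tfrac12\bigl(d(x,z')+d(z,y')\bigr)$, where the first bound uses selfadjointness via $d(z,x')=d(x,z')$. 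The paper instead applies the hypothesis at the two endpoints: it derives $d^2(x,y')\ge d_X(x,y)$ and $d^2(x,y')\ge d(x,X')+d(y,X')\ge\tfrac1\beta\bigl(d(x,x')+d(y,y')\bigr)-2\alpha$, averages the two, and compares with the triangle bound $d(x,y')\le d(x,x')+d_X(x,y)+d(y,y')$. Both decompositions are valid and give comparable constants ($1+\beta/2$ for you, $2\beta$ for the paper); yours is arguably more direct, needing only one application of the hypothesis at a single near-optimal point. The exploratory false starts in your writeup should be deleted, but the argument in the final third of your converse paragraph is a complete and correct proof.
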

\begin{proof}
First, suppose that $[d^2]=[d]$. Then there exist $\alpha\geq 0$, $\beta\geq 1$ such that
$$
d^2(x,x')\geq -\alpha+\frac{1}{\beta} d(x,x').
$$
On the other hand,
$$
d^2(x,x')=\inf_{y\in X}[d(x,y')+d(y,x')]=\inf_{y\in X}2d(x,y')\leq 2d(x,X'),
$$
hence
$d(x,X')\geq -\frac{\alpha}{2}+\frac{1}{2\beta}d(x,x')$.

Second, suppose that there exist $\alpha\geq 0$, $\beta\geq 1$ such that
$d(x,x')\leq \alpha+\beta d(x,X')$ for any $x\in X$. We need to estimate $d^2(x,z')$ both from below and from above. The estimate 
from above is given by
\begin{eqnarray*}
d^2(x,z')&=&\inf_{y\in X}[d(x,y')+d(y,z')]\leq d(x,x')+d(x,z')\leq \alpha+\beta(d(x,X'))+d(x,z')\\
&\leq&\alpha+\beta d(x,z')+d(x,z')=\alpha+(\beta+1)d(x,z').
\end{eqnarray*}
Here we took $y=x$ and used that $d(x,X')\leq d(x,z')$ for any $z\in X$. 

To obtain an estimate from below, note that
\begin{equation}\label{d1}
d^2(x,z')=\inf_{y\in X}[d(x,y')+d(y,z')]=\inf_{y\in X}[d(x,y')+d(y',z)]\geq d_X(x,z).
\end{equation}

We also have 
\begin{equation}\label{d2}
d^2(x,z')\geq d(x,X')+d(z,X')\geq -\alpha+\frac{1}{\beta} d(x,x')-\alpha+\frac{1}{\beta} d(z,z'),
\end{equation}

It follows from (\ref{d1}) and (\ref{d2}) that

\begin{eqnarray}\label{d4}
d^2(x,z')&\geq& \frac{1}{2}d_X(x,z)-\alpha+\frac{1}{2\beta}(d(x,x')+d(z,z'))\nonumber\\
&\geq& -\alpha+\frac{1}{2\beta}(d_X(x,z)+d(x,x')+d(z,z')).
\end{eqnarray}

On the other hand, the triangle inequality shows that
\begin{equation}\label{d3}
d(x,z')\leq d(x,x')+d_X(x',z')+d(z',z)= d(x,x')+d_X(x,z)+d(z,z').
\end{equation}

Then (\ref{d4}) and (\ref{d3}) give 
$$
d^2(x,z')\geq -\alpha+\frac{1}{2\beta}d(x,z').
$$

\end{proof}


The next result shows that selfadjoint idempotents can be characterized only by the values $d(x,x')$, $x\in X$.

We call two functions, $\varphi,\psi:X\to [0,\infty)$, equivalent 
if there exist $\alpha\geq 0$, $\beta\geq 1$ such that $-\alpha+\frac{1}{\beta} \psi(x)\leq \varphi(x)\leq \alpha+\beta \psi(x)$ 
for any $x\in X$.

\begin{prop}\label{xx'}
Let $d$, $\rho$ be two idempotent metrics on the double of $X$, $\rho^*=\rho$, $d^*=d$. Then $\rho\sim d$ if and only if
the functions $x\mapsto \rho(x,x')$ and $x\mapsto d(x,x')$ are equivalent.

\end{prop}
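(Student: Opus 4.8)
The ``only if'' direction is immediate: if $-\alpha+\tfrac1\beta\,\rho(x,y')\le d(x,y')\le\alpha+\beta\,\rho(x,y')$ holds for all $x,y\in X$, then putting $y=x$ shows at once that $x\mapsto d(x,x')$ and $x\mapsto\rho(x,x')$ are equivalent functions.

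For the converse, set $\varphi(x)=d(x,x')$ and $\psi(x)=\rho(x,x')$. The plan is to show that \emph{any} selfadjoint idempotent metric $d$ on the double of $X$ is quasi-isometric to the ``diagonal model''
\[
D(x,y')=d_X(x,y)+\varphi(x)+\varphi(y),\qquad x,y\in X
\]
(this $D$ need not satisfy the triangle inequality exactly, but that is irrelevant, since $D$ serves only as an auxiliary yardstick). The upper bound $d(x,y')\le D(x,y')$ is just the triangle inequality $d(x,y')\le d(x,x')+d_X(x,y)$ together with $\varphi(y)\ge0$, and uses nothing about idempotency. For the lower bound one reruns the computation from the proof of Theorem~\ref{projections}: the inequalities $d^2(x,y')\ge d(x,X')+d(y,X')$ and $d^2(x,y')\ge d_X(x,y)$, combined via Theorem~\ref{projections} applied to $d$ (which, since $[d^2]=[d]$, supplies $\alpha\ge0$, $\beta\ge1$ with $d(x,x')\le\alpha+\beta\,d(x,X')$), produce exactly the estimate (\ref{d4}), namely
\[
d^2(x,y')\ge-\alpha+\frac{1}{2\beta}\bigl(d_X(x,y)+\varphi(x)+\varphi(y)\bigr).
\]
Since $d$ is idempotent, $d(x,y')\ge-\alpha_1+\tfrac{1}{\beta_1}\,d^2(x,y')$ for some $\alpha_1\ge0$, $\beta_1\ge1$; substituting the previous display yields $d\sim D$ with explicit constants.

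It remains to glue the pieces together. Applying the same step to $\rho$ shows that $\rho$ is quasi-isometric to $P(x,y')=d_X(x,y)+\psi(x)+\psi(y)$. By hypothesis $\varphi$ and $\psi$ are equivalent functions, say $-a+\tfrac1b\,\psi(x)\le\varphi(x)\le a+b\,\psi(x)$ with $a\ge0$, $b\ge1$; adding these at $x$ and at $y$ and using $b\ge1$ to absorb the common summand $d_X(x,y)$ shows that $D$ and $P$ are quasi-isometric. Since quasi-isometry of functions is transitive, $d\sim D\sim P\sim\rho$, as desired. The only real obstacle is the lower bound in the middle step — recovering the off-diagonal values $d(x,y')$ from the diagonal values $d(x,x')$ up to quasi-isometry, which is precisely where idempotency is used; the rest is triangle-inequality bookkeeping.
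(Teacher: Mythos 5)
Your proof is correct. The ``only if'' direction is the same one-line observation as in the paper. For the converse the paper argues more directly: from Theorem~\ref{projections} it reads off $d(x,x')\leq\alpha+\beta d(x,X')\leq\alpha+\beta d(x,z')$, hence (using the hypothesis that the diagonal functions are equivalent) $\rho(x,x')\leq\alpha'+\beta' d(x,z')$; it combines this with the triangle-inequality bound $d_X(x,z)=d(x',z')\leq d(x',x)+d(x,z')\leq\alpha+(1+\beta)d(x,z')$, plugs both into $\rho(x,z')\leq\rho(x,x')+d_X(x,z)$ to conclude that $d$ dominates $\rho$, and finishes by symmetry. You isolate the same two ingredients but package them as a normal form: every selfadjoint idempotent class is represented, up to quasi-isometry, by $D(x,y')=d_X(x,y)+\varphi(x)+\varphi(y)$. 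Your route to the lower bound $D\lesssim d$ is somewhat longer — you pass through $d^2$ via the estimates (\ref{d1}), (\ref{d2}), (\ref{d4}) and then invoke $d\gtrsim d^2$, whereas one can get $\varphi(x)+\varphi(y)+d_X(x,y)\leq 3\alpha+(1+3\beta)d(x,y')$ directly from the statement of Theorem~\ref{projections} together with $d(x,X')\leq d(x,y')$, $d(y,X')\leq d(x,y')$ and the triangle inequality — but it is valid, and it buys a cleaner conclusion: a canonical representative of each selfadjoint idempotent class determined by its diagonal function, from which the proposition follows by transitivity. Both arguments are, as you say, triangle-inequality bookkeeping on top of Theorem~\ref{projections}.
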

\begin{proof}
One direction is trivial, so let us prove the non-trivial one.

Since $d$ is a selfadjoint idempotent, there are $\alpha\geq 0$, $\beta\geq 1$ such that
\begin{equation}\label{xx-1}
d(x,x')\leq \alpha+\beta d(x,X')\leq\alpha+\beta d(x,z')
\end{equation}
for any $x,z\in X$.

If the functions $x\mapsto \rho(x,x')$ and $x\mapsto d(x,x')$ are equivalent then there exist $\gamma>0$, $\delta\geq 1$ such that
\begin{equation}\label{xx-2}
\rho(x,x')\leq \gamma+\delta d(x,x')\leq \gamma+\delta(\alpha+\beta d(x,x'))=\alpha'+\beta' d(x,z'),.
\end{equation}
where $\alpha'=\gamma+\delta\alpha$, $\beta'=\delta\beta$.

Using (\ref{xx-1}) and the triangle inequality, we have
\begin{equation}\label{xx-3}
d_X(x,z)=d(x',z')\leq d(x',x)+d(x,z')\leq \alpha+(1+\beta)d(x,z').
\end{equation}

Using the triangle inequality again, together with (\ref{xx-2}) and (\ref{xx-3}), we have
\begin{eqnarray*}
\rho(x,z')&\leq&\rho(x,x')+\rho(x',z')=\rho(x,x')+d_X(x,z)\\
&\leq& \alpha'+\beta'd(x,z')+\alpha+(1+\beta)d(x,z')=(\alpha+\alpha')+(1+\beta+\beta')d(x,z'),
\end{eqnarray*}
i.e. $d$ dominates $\rho$. Symmetrically, $\rho$ dominates $d$, hence they are equivalent.

\end{proof}

It would be interesting to find a characterization of functions on $X$ which can be obtained from metrics on doubles.
The next statement shows that selfadjoint idempotents in $M(X)$ commute.

\begin{prop}\label{Prop23}
Let $d$, $\rho$ be two idempotent metrics on the double of $X$, $\rho^*=\rho$, $d^*=d$. Then $d\rho\sim\rho d$.

\end{prop}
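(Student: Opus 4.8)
The plan is to estimate both $d\rho(x,z') = \inf_{y}[\rho(x,y') + d(y,z')]$ and $\rho d(x,z') = \inf_{y}[d(x,y') + \rho(y,z')]$ against a common expression, using the selfadjoint-idempotent characterization from Theorem \ref{projections} applied to both $d$ and $\rho$. Thus, fix constants $\alpha \geq 0$, $\beta \geq 1$ that work simultaneously for both metrics, so that $d(x,x') \leq \alpha + \beta\, d(x,X')$ and $\rho(x,x') \leq \alpha + \beta\, \rho(x,X')$ for all $x \in X$; in particular $d(x,x') \leq \alpha + \beta\, d(x,w')$ and $\rho(x,x') \leq \alpha + \beta\, \rho(x,w')$ for every $w$. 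By symmetry of the statement in $d$ and $\rho$, it suffices to show that $\rho d$ dominates $d\rho$ up to the equivalence constants; swapping the roles then gives the reverse domination.

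For the upper bound on $\rho d$ in terms of $d\rho$, I would start from an arbitrary $y \in X$ and run the triangle inequality through the diagonal points $x'$ and $z$ to convert a term $\rho(y,z')$ into $\rho(x,y')$-type data. Concretely, use
\begin{equation*}
d(x,y') + \rho(y,z') \leq \bigl[d(x,y') + \rho(y,x')\bigr] + \rho(x',z') ,
\end{equation*}
together with $\rho(y,x') = \rho^*(x,y') = \rho(x,y')$ (selfadjointness) and $\rho(x',z') = d_X(x,z) = d(x',z')$, and then bound $d_X(x,z)$ by the triangle-inequality estimate $d_X(x,z) \leq d(x,x') + d(x,z') + \ldots$ as in \eqref{xx-3}. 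After also controlling $d(x,y')$ by a $d\rho$-term via the triangle inequality $d(x,y') \leq d(x,\cdot) + \cdots$, one takes the infimum over $y$. The symmetric manipulation, pushing through $z'$ and a diagonal point, yields the term with $d(y,z')$ replaced by $d(z,z')$ or $d(x,z')$; combining the two bounds (averaging, as in the proof of Theorem \ref{projections}) should produce $\rho d(x,z') \leq \alpha'' + \beta''\, d\rho(x,z')$.

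The main obstacle will be the bookkeeping of \emph{which} point one inserts the diagonal detour at: unlike the idempotent proofs where both ends of the segment live in the same metric, here $d$ governs one end and $\rho$ the other, so a naive detour $x \to x' \to z' \to z$ mixes the two. The trick I expect to need is to insert the detour \emph{at the infimizing point} $y$ — i.e. estimate $\rho(y,z')$ using $\rho(y,y') \leq \alpha + \beta\rho(y,\text{anything})$ and the triangle inequality $\rho(y,z') \leq \rho(y,y') + d_X(y,z)$, then bound $d_X(y,z) = d(y',z')$ and relate it back to $d$-quantities — so that the $d$-part and $\rho$-part of the expression stay separated and the idempotent inequalities for $d$ and $\rho$ can each be applied to their own half. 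Once the correct detour is identified the remaining estimates are the same routine triangle-inequality chains already used in Propositions \ref{xx'} and \ref{projections}, and the lower bound $\rho d(x,z') \geq d_X(x,z)$ (hence $\geq$ the corresponding lower bound for $d\rho$) comes for free exactly as in \eqref{d1}.
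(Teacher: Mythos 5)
Your final paragraph's plan --- evaluate both products at a point $y_0$ nearly realizing the infimum defining $d\rho(x,z')$, insert the diagonal detour at $y_0$, and apply the idempotency characterization of Theorem \ref{projections} separately to the $d$-part and the $\rho$-part --- is exactly the paper's strategy (its inequalities \eqref{y0}--\eqref{y3}). But your first ``concrete'' route, detouring through $x'$, is a dead end: it turns $d(x,y')+\rho(y,z')$ into $d(x,y')+\rho(x,y')+d_X(x,z)$, and $\inf_{y\in X}[d(x,y')+\rho(x,y')]$ is precisely $d\rho(x,x')$ (use $d^*=d$), so to proceed you would need $d\rho(x,x')\lesssim d\rho(x,z')$ uniformly in $z$ --- which is the idempotency of $d\rho$, essentially the statement being proved. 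Only the detour at the near-infimizer avoids this circularity.

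Moreover, the step you dismiss as ``routine'' is where the real work sits. With $r=d_X(x,y_0)+d_X(y_0,z)$ and $s=d(y_0,y_0')+\rho(y_0,y_0')$ one gets $\rho d(x,z')\leq r+s$, and one must show $r+s\lesssim \rho(x,y_0')+d(y_0,z')\leq d\rho(x,z')+\varepsilon$; the paper devotes a separate elementary lemma to this (Lemma \ref{ab}, comparing $r+s$ with $\max(\frac{1}{\beta}s-2\alpha,\,r-s)$). Your sketch can in fact be closed more directly, but only by applying Theorem \ref{projections} at $y_0$ in the right directions: $\rho(y_0,y_0')\leq\alpha+\beta\rho(y_0,x')=\alpha+\beta\rho(x,y_0')$ and $d(y_0,y_0')\leq\alpha+\beta d(y_0,z')$, combined with $d_X(x,y_0)\leq\rho(x,y_0')+\rho(y_0,y_0')$ and $d_X(y_0,z)\leq d(y_0,y_0')+d(y_0,z')$; summing gives $\rho d(x,z')\leq 4\alpha+(1+2\beta)\bigl(d\rho(x,z')+\varepsilon\bigr)$, and symmetry in $d$ and $\rho$ finishes. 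As written, the proposal stops short of this decisive estimate, so it is a correct plan with a genuine gap at its key quantitative step.
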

\begin{proof}

By definition, for any $\varepsilon>0$ there exists $y_0\in X$ such that
\begin{equation}\label{y0}
d\rho(x,z')=\inf_{y\in X}[\rho(x,y')+d(y,z')]\geq \rho(x,y'_0)+d(y_0,z')-\varepsilon.
\end{equation}
By Theorem \ref{projections}, there exist $\alpha\geq 0$, $\beta\geq 1$ such that
$$
\rho(x,y'_0)\geq-\alpha+\frac{1}{\beta}\rho(y_0,y'_0);\quad d(y_0,z')\geq-\alpha+\frac{1}{\beta} d(y_0,y'_0).
$$
Then
\begin{equation}\label{y1}
d\rho(x,z')+\varepsilon\geq \frac{1}{\beta}(\rho(y_0,y'_0)+d(y_0,y'_0))-2\alpha.
\end{equation}

The triangle inequality applied to the right hand side of (\ref{y0}) gives
\begin{equation}\label{y2}
d\rho(x,z')+\varepsilon\geq \rho(x,y_0)-\rho(y_0,y'_0)+d(z,y_0)-d(y_0,y'_0). 
\end{equation}

On the other hand,
\begin{equation}\label{y3}
\rho d(x,z')\leq d(x,y'_0)+\rho(z,y'_0)\leq d(x,y_0)+d(y_0,y'_0)+\rho(y_0,y'_0)+\rho(y_0,z).
\end{equation}

Denote $d(x,y_0)+\rho(y_0,z)=d_X(x,y_0)+d_X(y_0,z)$ by $r$ and $d(y_0,y'_0)+\rho(y_0,y'_0)$ by $s$.
Then (\ref{y1}) and (\ref{y2}) can be written as
$$
d\rho(x,z')\geq \max\Bigl(\frac{1}{\beta} s-2\alpha,r-s\Bigr)-\varepsilon,
$$
and (\ref{y3}) can be written as
$$
\rho d(x,z')\leq r+s. 
$$

To finish the argument, we need the following statement.
\begin{lem}\label{ab} 
There exists $\lambda>1$ such that $r+s\leq \lambda (\max(\frac{1}{\beta}s-2\alpha,r-s)+2\alpha)$ for any $r,s\geq 0$.

\end{lem}
\begin{proof}
First, note that $\max(\frac{1}{\beta}s,r-s)\leq \max(\frac{1}{\beta}s-2\alpha,r-s)+2\alpha$. It remains to show that 
\begin{equation}\label{ab1}
r+s\leq\lambda \max\Bigl(\frac{1}{\beta}s,r-s\Bigr) 
\end{equation}
for some $\lambda>1$. Set $s=t\,r$, $t\in[0,\infty)$.
Then (\ref{ab1}) becomes
$$
(1+t)r\leq\lambda\max\Bigl(\frac{t}{\beta}r,(1-t)r\Bigr),
$$
or, simply,
\begin{equation}\label{ab2}
(1+t)\leq\lambda\max\Bigl(\frac{t}{\beta},(1-t)\Bigr)
\end{equation}
Taking $\lambda=\beta(2+\beta)$, we can provide that (\ref{ab2}) holds for any $t\in[0,\infty)$.

\end{proof}
Lemma \ref{ab} implies that $\rho d(x,z')\leq \lambda d\rho(x,z')+2\alpha+\varepsilon$. Symmetry implies that $\rho d$ and $d\rho$ are equivalent. This finishes the proof of Proposition \ref{Prop23}.

\end{proof}

\section{Metrics from subsets}

\begin{example}
Let $A\subset X$ be a subset. Define a metric $d_A$ on the double of $X$ by
$$
d_A(x,y')=\inf_{z\in A}[d_X(x,z)+1+d_X(z,y)].
$$
Then $d_A$ is selfadjoint, $d_A(x,x')=\inf_{z\in A}[2d(x,z)+1]=2d(x,A)+1$, and
\begin{eqnarray*}
d_A(x,X')&=&\inf_{y\in X,z\in A}[d_X(x,z)+1+d_X(z,y)]=\inf_{z\in A}[d_X(x,z)+1]\\
&=&d(x,A)+1, 
\end{eqnarray*}
hence $[d_A]$ is an idempotent.

\end{example}

A special case of the above metrics $d_A$, $A\subset X$, is the case $A=\{x_0\}$ for a fixed point $x_0\in X$.
It is clear that the equivalence class of the metric $e_0=d_{\{x_0\}}$ does not depend on the choice of the point $x_0$.

Let $A,B\subset X$ be closed subsets. In this section we establish when $[d_A]=[d_B]$ under the assumption that $X$ is locally compact (and the metric $d_X$ is proper).

Recall that the Higson compactification $h X$ of a locally compact metric space $X$ is the Gelfand dual of the $C^*$-algebra $C_h(X)$ of bounded continuous functions $f$ on $X$ such that $\lim_{x\to\infty}\operatorname{Var}_r(f)(x)=0$ for any $r>0$, where 
$$
\operatorname{Var}_r(f)(x)=\sup_{y\in X,d_X(x,y)\leq r}|f(x)-f(y)|. 
$$
The Gelfand dual of the quotient $C_h(X)/C_0(X)$ is the Higson corona $\nu X=h X\setminus X$.  

Let $J_A=\{f\in C_h(X):f|_A=0\}$. This is an ideal in $C_h(X)$. Then the Gelfand dual of $C_h(X)/J_A$ is the closure $\bar{A}$ of $A$ in $h X$, and $\bar{A}\setminus A=\bar{B}\setminus B$ if and only if $J_A+C_0(X)=J_B+C_0(X)$.

\begin{prop}\label{AB}
The following are equivalent:
\begin{enumerate}
\item
$[d_A]=[d_B]$; 
\item
there exists $C>0$ such that $A$ lies in the $C$-neighborhood of $B$ and $B$ lies in the $C$-neighborhood of $A$;
\item
$\bar{A}\setminus A=\bar{B}\setminus B$ in the Higson corona.
\end{enumerate}

\end{prop}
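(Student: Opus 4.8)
The plan is to prove the equivalences by showing $(2)\Rightarrow(1)$, $(1)\Rightarrow(2)$, and $(2)\Leftrightarrow(3)$, with the last being essentially a reformulation using the Higson corona machinery already set up. The key computational input is the explicit formula $d_A(x,x') = 2d(x,A)+1$ together with Proposition~\ref{xx'}: since $[d_A]$ and $[d_B]$ are selfadjoint idempotents, $[d_A]=[d_B]$ holds if and only if the functions $x\mapsto d(x,A)$ and $x\mapsto d(x,B)$ are equivalent in the sense defined before Proposition~\ref{xx'}.

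For $(2)\Rightarrow(1)$: if $A$ lies in the $C$-neighborhood of $B$, then for every $x\in X$ and every $z\in A$ there is $w\in B$ with $d_X(z,w)\le C$, so $d(x,B)\le d(x,A)+C$; symmetrically $d(x,A)\le d(x,B)+C$. Hence the two distance functions are equivalent (with additive constant $C$, multiplicative constant $1$), and Proposition~\ref{xx'} gives $[d_A]=[d_B]$. For $(1)\Rightarrow(2)$: by Proposition~\ref{xx'} there exist $\alpha\ge 0$, $\beta\ge 1$ with $-\alpha+\frac1\beta d(x,A)\le d(x,B)\le \alpha+\beta d(x,A)$ for all $x$. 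Applying this at points $x\in A$ gives $d(x,B)\le\alpha$, so $A$ lies in the $\alpha$-neighborhood of the closure of $B$; since $B$ is closed, $A$ lies in the $(\alpha+1)$-neighborhood of $B$ (or just the $\alpha$-neighborhood if one works with closures throughout). Symmetrically for $B$, and taking $C=\alpha+1$ finishes this direction.

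For $(2)\Leftrightarrow(3)$: this is where I expect the main work, and it is where properness of $d_X$ and local compactness are genuinely used. The bridge is the identity already recorded in the excerpt: $\bar A\setminus A = \bar B\setminus B$ in $hX$ if and only if $J_A+C_0(X) = J_B+C_0(X)$. So it suffices to show that $(2)$ holds if and only if $J_A+C_0(X)=J_B+C_0(X)$. If $(2)$ holds with constant $C$, then any $f\in C_h(X)$ vanishing on $A$ is small on the $C$-neighborhood of $B$ by the Higson condition $\operatorname{Var}_C(f)(x)\to 0$, so modulo $C_0(X)$ it lies in $J_B$; this gives $J_A+C_0(X)\subseteq J_B+C_0(X)$ and symmetrically equality. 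Conversely, if $(2)$ fails, say there is a sequence $a_n\in A$ with $d(a_n,B)\to\infty$, one constructs $f\in C_h(X)$ with $f|_A=0$ but $f$ not in $J_B+C_0(X)$: take $f(x)=\max(0, 1 - \frac1{r_n}d(x,A))$-type bumps... more cleanly, define $f(x)=\varphi(d(x,A))$ for a suitable slowly-varying $\varphi$ that is $0$ near $0$ and $\to 1$, rescaled so that the Higson (vanishing oscillation) condition holds; since $a_n$ escapes every neighborhood of $B$, $f$ stays bounded away from $0$ on $B$ far out, so $f\notin J_B+C_0(X)$. The main obstacle is making this separating function genuinely Higson: one needs the oscillation of $x\mapsto\varphi(d(x,A))$ to tend to zero at infinity, which forces $\varphi$ to have derivative tending to zero, and then one must arrange that the ``gaps'' $d(a_n,B)$ grow fast enough relative to the slope of $\varphi$ — a standard but slightly delicate diagonal construction that uses properness of $X$ to control compact pieces. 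Once the separating function is in hand, $(3)\Rightarrow(2)$ follows by contraposition, completing the cycle.
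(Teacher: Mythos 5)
Your treatment of $(1)\Leftrightarrow(2)$ is correct and is essentially the paper's argument: compare the functions $x\mapsto 2d_X(x,A)+1$ and $x\mapsto 2d_X(x,B)+1$ and invoke Proposition \ref{xx'}. The problems are in $(2)\Leftrightarrow(3)$, where you have correctly located the difficulty but not resolved it. For $(2)\Rightarrow(3)$, the clause ``$f$ is small on $B$ far out, so modulo $C_0(X)$ it lies in $J_B$'' is precisely the hard part: knowing that $f|_B$ vanishes at infinity does not by itself produce a decomposition $f=g+h$ with $g\in C_h(X)$, $g|_B=0$ and $h\in C_0(X)$; one must modify $f$ near $B$ so that it vanishes exactly on $B$ while the modified function stays in $C_h(X)$ and the difference stays in $C_0(X)$. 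The paper spends most of its proof on exactly this step (an inductive Tietze-extension construction over the annuli $D_k$). A shorter route does exist --- multiply $f$ by $1-\chi$, where $\chi=\phi(d_X(\cdot,A))$ is a cutoff equal to $1$ on $N_{2C}(A)$ and to $0$ off $N_{3C}(A)$, and check that $f(1-\chi)\in J_B\cap C_h(X)$ and $f\chi\in C_0(X)$ using the bound $|f(x)|\le\operatorname{Var}_{3C+r}(f)(x)$ on the set where $\chi$ has nonzero $r$-variation --- but some such argument must actually be supplied.

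For $(3)\Rightarrow(2)$ (by contraposition) there are two concrete errors. First, the roles of $A$ and $B$ are crossed: from a sequence $a_n\in A$ with $d_X(a_n,B)\to\infty$ one wants a Higson function vanishing on $B$ and equal to $1$ at the points $a_n$, so that $f\in J_B\subset J_B+C_0(X)$ while $f\notin J_A+C_0(X)$. A function with $f|_A=0$ is useless here: the hypothesis provides no points of $B$ far from $A$ (for instance $B\subset A$ is entirely possible), so such an $f$ need not be bounded away from $0$ anywhere on $B$. Second, the ``cleaner'' candidate $f=\varphi(d_X(\cdot,A))$ with slowly varying $\varphi$ is in general not a Higson function for any non-constant $\varphi$: points can go to infinity while remaining at bounded distance from $A$, and near the level set where $\varphi$ begins to increase the $r$-variation of $f$ is bounded below by a positive constant no matter how far out one goes, so no choice of slope decay rescues it. The construction that works is the one you mention first and then abandon: after passing to a subsequence with $d(a_n,B)\ge n$ and $d(a_n,a_j)\ge n+j$, take the disjointly supported bumps $h_n=(1-d(\cdot,a_n)/n)_+$, which have Lipschitz constant $1/n$ and supports disjoint from $B$, and sum them; it is the decay of the Lipschitz constants (not of a global slope of a radial profile) that makes $\sum_n h_n$ a Higson function.
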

\begin{proof}
We begin with (1)$\iff$(2). 
Suppose first that $[d_A]=[d_B]$. Note that $d_A(x,x')=2d_X(x,A)+1$. In particular, $d_A(x,x')=1$ when $x\in A$. Then there exists $C>0$ such that $d_B(x,x')=2d_B(x,B)<C$ for any $x\in A$, in other words, $A$ lies in the $C$-neighborhood of $B$. Similarly, $B$ lies in the $C$-neighborhood of $A$ (maybe with another $C$). 

Assume now that (2) holds. Then $d_X(x,A)-C\leq d_X(x,B)\leq d_X(x,A)+C$, hence the functions $d_A(x,x')=2d_X(x,A)+1$ and $d_B(x,x')=2d_X(x,B)+1$ are equivalent. By Proposition \ref{xx'} we are done.

Now let us show that (2)$\iff$(3). Let (2) hold, and let 
$f\in J_A$. Let $x_0\in X$. Let $B\subset N_C(A)$. Let $r:X\to [0,\infty)$ and $\mu:X\to [0,\infty)$ be defined by $r(x)=d_X(x,x_0)$ and by $\mu(x)=d_X(x,A)$ respectively. Define the map $\gamma:X\to[0,\infty)\times[0,\infty)$ by $\gamma(x)=(r(x),\mu(x))$. Let 
$$
F_0=\gamma^{-1}([0,\infty)\times[0,C]); \  F_1=\gamma^{-1}([0,\infty)\times[2C,\infty)); \ D_k=\gamma^{-1}([k-1,k]\times[C,2C]).
$$ 
Then $X=F_0\cup F_1\cup(\bigcup_{k=1}^\infty D_k)$.  

For the function $f\in J_A$, set $f_{n}=\sup\{|f(x)|:x\in \bigcup_{k=n+1}^\infty D_k\}$. As $\bigcup_{k=1}^\infty D_k\subset N_{2C}(A)$ and as $f\in C_h(X)$ satisfies $f|_A=0$, one has $\lim_{n\to\infty}f_n=0$.

Let us construct a function $g\in C_h(X)$. Set $g|_{F_0}=0$ and $g|_{F_1}=f|_{F_1}$. Our aim is to extend $g$ to the whole $X$ with the following properties: 

\begin{eqnarray}\label{estim}
\|g|_{D_n}\|&\leq& 2f_{n-1};\\ 
\|g|_{E_n}\|&\leq& 2f_n, \nonumber 
\end{eqnarray}
where $E_n=\gamma^{-1}(\{n\}\times[C,2C])$. We construct such $g$ inductively. Suppose that we have already extended $g$ to $F_0\cup F_1\cup(\bigcup_{k=1}^{n}D_k)$. By the Tietze Extension Theorem, extend $g$ to $D_{n+1}$, and denote this extension on $D_{n+1}$ by $\tilde g$. As $\|g|_{E_n}\|\leq 2f_n$ and $|f(x)|\leq f_n$ for any $x\in\gamma^{-1}([n,n+1]\times\{2C\})$, we have $\|\tilde g|_{D_{n+1}}\|\leq 2f_n$. 

As $|g(x)|\leq f_{n+1}$ for any $x\in\gamma^{-1}(\{n+1\}\times\{2C\})$, there exists $C_0\in[C,2C]$ such that $|\tilde g(x)|\leq 2 f_{n+1}$ for any $x\in\gamma^{-1}(\{n+1\}\times[C_0,2C])$. Let $\varphi:[n,n+1]\times[C,2C]\to[0,1]$ be a continuous function such that $\varphi(r,2C)=\varphi(n,\mu)=1$ for any $r\in[n,n+1]$, $\mu\in[C,2C]$, and $\varphi(n+1,\mu)=0$ for $\mu\in[C,C_0]$. Then set $g(x)=\tilde g(x)\varphi(\gamma(x))$ for $x\in D_{n+1}$. Then $g$ is continuous on $F_0\cup F_1\cup D_1\cup\cdots\cup D_{n+1}$, $\|g|_{D_{n+1}}\|\leq 2f_n$ and $\|g|_{E_{n+1}}\|\leq 2f_{n+1}$. 

By construction, $g|_B=0$, and it follows from (\ref{estim}) that $f-g\in C_0(X)$, therefore $g\in J_B+C_0(X)$, i.e., $J_A\subset J_B+C_0(X)$. Symmetrically, $J_B\subset J_A+C_0(X)$.

Now, suppose that (3) holds, i.e., $J_A+C_0(X)=J_B+C_0(X)$. If $A$ does not lie in a $C$-neighborhood of $B$ for any $C$ then there exists a sequence $x_n\in A$, $n\in\mathbb N$, such that $\lim_{n\to\infty}d_X(x_n,B)=\infty$. Note that, necessarily, $\lim_{n\to\infty}x_n=\infty$.

Passing to a subsequence of $(x_n)_{n\in\mathbb N}$, it may be arranged that $d(x_n,B)\geq n$ and
$d(x_n,x_j)\geq n+j$ for all $n,j\in\mathbb N$. Let $h_n(x) = (1-d(x, x_n)/n)_+$ be the positive part of
$1 - d(x, x_n)/n$. This function is Lipschitz with constant $1/n$ and supported in the
ball of radius $n$ around $x_n$. By assumption, these balls are all disjoint and disjoint
from $B$ as well. The sum $f=\sum_{n\in\mathbb N}h_n$ belongs to $C_h(X)$. As $f|_B=0$, we have $f\in J_B$, hence $f\in J_A+C_0(X)$. On the other hand, as $f(x_n)=1$ for any $n\in\mathbb N$, hence $f\notin J_A+C_0(X)$ 
(recall that all $x_n$, $n\in\mathbb N$, lie in $A$, and $\lim_{n\to\infty}x_n=\infty$). This contradiction finishes the proof.

\end{proof} 


Note that an arbitrary idempotent metric need not be equivalent to $d_A$ for any $A$.

\section{Order structure}

Let $\rho$ and $d$ be metrics on the double of $X$. We say that $[\rho]\preceq[d]$ if there exists a metric $d'\in[d]$ such that
$d'(x,z')\leq \rho(x,z')$ for any $x,z\in X$. This gives a partial order on $M(X)$. 

\begin{lem}
Let $d$, $\rho$ be selfadjoint idempotent metrics on the double of $X$. Then $d\preceq \rho$ if and only if $[\rho][d]=[d]$.

\end{lem}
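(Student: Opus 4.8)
The plan is to prove the two implications separately, using Proposition~\ref{xx'} (which reduces equivalence of selfadjoint idempotents to equivalence of the diagonal functions $x \mapsto d(x,x')$) together with the formulas $d^2(x,x') = 2d(x,X')$ and the characterization of idempotents from Theorem~\ref{projections}.

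First I would treat the direction $d \preceq \rho \Rightarrow [\rho][d] = [d]$. Since $\rho$ and $d$ are selfadjoint, $[\rho][d] = [\rho \circ d]$ and by Proposition~\ref{inv-semi}-type reasoning I want to compare $\rho \circ d$ with $d$. The upper bound should be easy: taking the point $y = x$ in the infimum, $(\rho d)(x,z') = \inf_y [d(x,y') + \rho(y,z')] \le d(x,x') + \rho(x,z')$, and since $d \preceq \rho$ means (after replacing $\rho$ by an equivalent metric) $\rho(x,z') \le d(x,z')$, while also $d(x,x') \le \alpha + \beta d(x,z')$ because $d$ is a selfadjoint idempotent (Theorem~\ref{projections} gives $d(x,x') \le \alpha + \beta\, d(x,X') \le \alpha + \beta\, d(x,z')$), this gives $(\rho d)(x,z') \le \alpha + (\beta+1) d(x,z')$. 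For the lower bound, $(\rho d)(x,z') = \inf_y[d(x,y') + \rho(y',z)] \ge \inf_y[d(x,y') + d(y',z)] \ge d_X(x,z)$ using $\rho \preceq$-type domination the wrong way — so instead I bound $(\rho d)(x,z') \ge \inf_y [d(x,y')]\ge$ something; more carefully, $d(x,y') \ge d(x,X') \ge \frac1\beta d(x,x') - \alpha$ and $\rho(y,z') \ge d_X$-triangle estimates, mirroring the computation in Theorem~\ref{projections}. Combining a $d_X(x,z)$ lower bound with a $d(x,x')$ lower bound as in inequality~(\ref{d4}) and then applying the triangle inequality~(\ref{d3}) to bound $d(x,z')$ from above yields $(\rho d)(x,z') \ge -\alpha' + \frac{1}{\beta'} d(x,z')$, so $[\rho d] = [d]$.

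For the converse, suppose $[\rho][d] = [d]$, i.e.\ $\rho d \sim d$. I want to produce a metric in $[\rho]$ dominating a metric in $[d]$ pointwise. The natural candidate is to compare the diagonal values: from $\rho d \sim d$ and the fact that $(\rho d)(x,x') = \inf_y[d(x,y') + \rho(y,x')] \le d(x,x') + \rho(x,x')$ while also $(\rho d)(x,x') \ge d(x,X') + \rho(x,X') $-type lower bounds, one extracts that $\rho(x,x')$ is controlled above by a multiple of $d(x,x')$ (since $(\rho d)(x,x') \sim d(x,x')$ forces $\rho(x,x') \lesssim d(x,x')$, as a large $\rho(x,x')$ alone need not inflate $(\rho d)$, but a large $d(x,x')$ does bound things from the other side — I'd check this carefully). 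Concretely: $d(x,x') \sim (\rho d)(x,x') = \inf_y 2\cdot\frac{d(x,y')+\rho(y,x')}{2}$, and picking the near-optimal $y_0$, apply Theorem~\ref{projections} to both $d$ and $\rho$ to get $d(x,y_0') \ge \frac1\beta d(y_0,y_0') - \alpha$ and $\rho(y_0,x') \ge \frac1\beta \rho(y_0,y_0') - \alpha$; this bounds $(\rho d)(x,x')$ below by a multiple of $d(y_0,y_0')$, and a triangle-inequality bound bounds it below in terms of $\rho(x,x')$ minus $\rho(y_0,y_0')$-type terms, exactly as in the proof of Proposition~\ref{Prop23}. Invoking Lemma~\ref{ab} then forces $\rho(x,x') \le \lambda\, d(x,x') + $const. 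Once I know $\rho(x,x') \lesssim d(x,x')$ pointwise (up to the affine constants), I define $d'(x,z') := \rho(x,x') + d_X(x,z) + \rho(z,z')$ composed appropriately, or more simply note that $d'' := \alpha'' + \beta'' d$ satisfies $d''(x,z') \le \rho(x,z')$ would be what $d \preceq \rho$ needs — but that is backwards; instead the definition $[\rho]\preceq\cdots$ requires a representative of $[d]$ below $\rho$, so I want $d'(x,z') \le \rho(x,z')$ with $d' \sim d$. Using $\rho(x,z') \ge \rho(x,X') \ge \frac1\beta \rho(x,x') - \alpha$ and $\rho(x,z') \ge d_X(x,z)$, together with $\rho(x,x') \sim d(x,x')$ and the triangle-type upper bound $d(x,z') \le d(x,x') + d_X(x,z) + d(z,z')$, one checks that $d'(x,z') := \frac{1}{\beta'''}\big(d(x,x') + d_X(x,z) + d(z,z')\big) - \alpha'''$ (suitably clamped to be a genuine metric, or just as an equivalence-class representative via Proposition~\ref{inv-semi}) lies in $[d]$ and is dominated by $\rho$, giving $d \preceq \rho$.

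The main obstacle I expect is the bookkeeping in the converse direction: extracting the pointwise domination $\rho(x,x') \lesssim d(x,x')$ from the mere quasi-isometry $\rho d \sim d$ requires running the same max-of-two-lower-bounds argument as Proposition~\ref{Prop23} and then Lemma~\ref{ab}, and then one must still manufacture an honest metric in the class $[d]$ lying below $\rho$ rather than merely comparing diagonal functions. Handling the affine constants $\alpha, \beta$ consistently — and making sure the "clamped" representative is genuinely a metric satisfying the triangle inequality — is the fiddly part; the conceptual content is entirely contained in Theorem~\ref{projections}, Proposition~\ref{xx'} and Lemma~\ref{ab}.
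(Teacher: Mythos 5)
Your forward implication ($d\preceq\rho\Rightarrow[\rho][d]=[d]$) is essentially sound and close to the paper's argument: the upper bound via $y=x$ is exactly right, and the lower bound can be closed, though not quite as you describe it. If you mirror (\ref{d2})--(\ref{d3}) you obtain $\rho d(x,z')\gtrsim d(x,x')+\rho(z,z')$, with $\rho(z,z')$ where (\ref{d3}) wants $d(z,z')$, and $\rho\le d$ points the wrong way to repair this; the fix is to note that with $\rho\le d$ pointwise one has $d(x,y')+\rho(y,z')\ge\rho(x,y')+\rho(y',z)\ge d_X(x,z)$, and that the sharper triangle bound $d(x,z')\le d(x,x')+d_X(x,z)$ already suffices. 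Alternatively (and this is what the paper does) one need only compare the diagonal values $\rho d(x,x')$ and $d(x,x')$ and invoke Proposition \ref{xx'}, since $\rho d$ is again a selfadjoint idempotent by commutativity of idempotents.

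The converse direction contains a genuine error: you have the partial order backwards. By the paper's definition, $d\preceq\rho$ (that is, $[d]\preceq[\rho]$) means that some representative of $[\rho]$ lies pointwise below $d$ --- equivalently, after rescaling, that $\rho(x,z')\le A+B\,d(x,z')$ for all $x,z$ --- and \emph{not} that some $d'\in[d]$ satisfies $d'\le\rho$. You correctly extract the one-sided bound $\rho(x,x')\le\lambda d(x,x')+c$ from $[\rho d]=[d]$, but then (a) you assert the two-sided equivalence $\rho(x,x')\sim d(x,x')$, which does not follow, and (b) you aim to manufacture $d'\sim d$ with $d'\le\rho$, which is the condition for $\rho\preceq d$ and is generally impossible. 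Test case: $X=\mathbb R$, $\rho=I$, $d=e_0=d_{\{x_0\}}$. Then $[I][e_0]=[e_0]$, so the lemma asserts $e_0\preceq I$; yet $e_0(x,x')=2d_X(x,x_0)+1$ is unbounded while $I(x,x')=1$, so the diagonal functions are not equivalent and no metric equivalent to $e_0$ can be dominated pointwise by an affine multiple of $I$ --- your proposed $d'$ does not exist. The correct finish is the paper's: feed the one-sided bound $\rho(x,x')\lesssim d(x,x')$ into the argument of Proposition \ref{xx'} (using (\ref{xx-1}) for $d$ and the triangle inequality $\rho(x,z')\le\rho(x,x')+d_X(x,z)$) to obtain $\rho(x,z')\le A+B\,d(x,z')$ for all $x,z$, which is precisely $d\preceq\rho$.
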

\begin{proof}
First, suppose that $[\rho d]=[d]$. Since both $\rho$ and $d$ are selfadjoint idempotents, there exist $\alpha\geq 0,\beta\geq 1$ such that
$\rho(x,X')\geq\frac{1}{\beta}\rho(x,x')-\alpha$ and $d(x,X')\geq\frac{1}{\beta}d(x,x')-\alpha$ for any $x\in X$.
Then
\begin{eqnarray*}
\rho\circ d(x,x')&\geq&\inf_{y\in X}[d(x,y')+\rho(y,x')]\geq d(x,X')+\rho(x,X')\geq \rho(x,X')\\
&\geq& \frac{1}{\beta}\rho(x,x')-\alpha.
\end{eqnarray*}
It follows from $[\rho\circ d]=[d]$ that there exist $\alpha'>0,\beta'>1$ such that $\rho\circ d(x,x')\leq \beta'd(x,x')+\alpha'$. 
Combining the last two inequalities,
we get $\beta'd(x,x')+\alpha'\geq \frac{1}{\beta}\rho(x,x')-\alpha$, or $d(x,x')\geq \frac{1}{\beta\beta'}\rho(x,x')-\alpha''$ for some $\alpha''>0$. 
Using the proof of Proposition \ref{xx'}, we
conclude that $d\preceq \rho$.

Second, suppose that $d\preceq \rho$. Without loss of generality, we may assume that $d(x,z')\geq\rho(x,z')$ for any $x,z\in X$. 
Then
$$
\rho\circ d(x,x')=\inf_{y\in X}[d(x,y')+\rho(y,x')]\leq d(x,x')+\rho(x,x')\leq 2d(x,x').
$$
This implies that $[d]\preceq[\rho d]$. On the other hand,
$$
\rho\circ d(x,x')\geq d(x,X')+\rho(x,X')\geq d(x,X')\geq \frac{1}{\beta}(d(x,x')-\alpha),
$$
which implies $[\rho d]\preceq[d]$. Thus, $[\rho d]=[d]$.

\end{proof}

\section{$C^*$-algebra of $M(X)$}


\begin{prop}
Let $a\in M(X)$ be an idempotent. Then it is selfadjoint.

\end{prop}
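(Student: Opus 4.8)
The plan is to exhibit the given idempotent $a$ as a product of two \emph{commuting selfadjoint} idempotents canonically attached to it, and then invoke Proposition~\ref{Prop23}.

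First I would record the trivial fact that $a^*$ is again an idempotent: since $\ast$ reverses products and $a=a^2$, we get $(a^*)^2=(a^2)^*=a^*$. Writing $a=[d]$, the corollary to Proposition~\ref{inv-semi} asserting that $[d^*d]$ is a selfadjoint idempotent then shows that both
$$
f:=a^*a\qquad\text{and}\qquad g:=aa^*
$$
are selfadjoint idempotents of $M(X)$ (apply that corollary to $d$ and to $d^*$, respectively).

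Next I would compute the two products $gf$ and $fg$ inside $M(X)$, using associativity of composition together with the relations $a^2=a$, $(a^*)^2=a^*$, and Proposition~\ref{inv-semi} rewritten as the identities $a=aa^*a$ and $a^*=a^*aa^*$ in $M(X)$:
$$
gf=(aa^*)(a^*a)=a\,(a^*a^*)\,a=aa^*a=a,\qquad fg=(a^*a)(aa^*)=a^*\,(aa)\,a^*=a^*aa^*=a^*.
$$
Finally, Proposition~\ref{Prop23} guarantees that the selfadjoint idempotents $f$ and $g$ commute, whence $a=gf=fg=a^*$, which is exactly the assertion.

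I do not expect a genuine obstacle here: the only care needed is in bookkeeping the order of composition (recall that the product in $M(X)$ is $\rho d=\rho\circ d$) and in noting that every manipulation is legitimate at the level of equivalence classes, which it is because both the multiplication and the involution $\ast$ descend to $M(X)$. All the nontrivial content, namely that selfadjoint idempotents commute, has already been isolated as Proposition~\ref{Prop23}, so the present statement is essentially a formal consequence of it.
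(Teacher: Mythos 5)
Your proof is correct and is essentially the paper's own argument: both reduce the claim to the commutativity of the selfadjoint idempotents $a^*a$ and $aa^*$ (Proposition \ref{Prop23}) combined with the regularity identities $a=aa^*a$ and $a^*=a^*aa^*$ from Proposition \ref{inv-semi}. The paper merely packages the same computation as the single chain $a^*=a^*aa^*=(a^*a)(aa^*)=(aa^*)(a^*a)=aa^*a=a$.
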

\begin{proof}
Note that $a^*$ also must be an idempotent. Then use commutativity of selfadjoint idempotents to show that
$$
a^*=a^*aa^*=(a^*a)(aa^*)=(aa^*)(a^*a)=aa^*a=a.
$$

\end{proof}

\begin{cor}
Any two idempotents in $M(X)$ commute.

\end{cor}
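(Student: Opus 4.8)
The plan is to reduce the statement to the already-established fact that selfadjoint idempotents commute (Proposition \ref{Prop23}). Given arbitrary idempotents $a,b\in M(X)$, the immediately preceding Proposition tells us that every idempotent in $M(X)$ is selfadjoint, so in particular $a^*=a$ and $b^*=b$. But these are exactly the hypotheses of Proposition \ref{Prop23} (phrased there at the level of representing metrics $d,\rho$ with $d^*=d$, $\rho^*=\rho$), which yields $ab=ba$.

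Concretely, I would first observe that $a,b$ being idempotents in $M(X)$ means we may choose representing metrics $d,\rho$ on the double of $X$ with $[d^2]=[d]$, $[\rho^2]=[\rho]$, and by the previous Proposition we may further assume $d^*\sim d$ and $\rho^*\sim\rho$; as noted in Section~1, a selfadjoint class always has a genuinely symmetric representative, so we can take $d^*=d$ and $\rho^*=\rho$ on the nose. Then Proposition \ref{Prop23} gives $d\rho\sim\rho d$, i.e. $ab=ba$ in $M(X)$. Since the choice of $a,b$ was arbitrary, any two idempotents commute.

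There is essentially no obstacle here — the corollary is a one-line consequence of the two results it follows. The only point worth a sentence of care is the passage from ``the class $[d]$ is selfadjoint'' to ``there is a symmetric representative'', but that has already been recorded in the discussion after Example \ref{I} via the averaging trick $\tilde d(x,y')=\tfrac12(d(x,y')+d(y,x'))$, and one checks that $\tilde d$ is still an idempotent because $[\tilde d]=[d]=[d^2]=[\tilde d^2]$. So the proof is simply: every idempotent is selfadjoint by the preceding Proposition, selfadjoint idempotents commute by Proposition \ref{Prop23}, done.

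\begin{proof}
Let $a,b\in M(X)$ be idempotents. By the preceding Proposition, $a$ and $b$ are selfadjoint. Choose representing metrics $d\in a$, $\rho\in b$; replacing them by their symmetrizations as after Example \ref{I} (which does not change the equivalence classes, and preserves idempotency since $[\tilde d]=[d]$), we may assume $d^*=d$, $\rho^*=\rho$, $[d^2]=[d]$ and $[\rho^2]=[\rho]$. Proposition \ref{Prop23} then gives $d\rho\sim\rho d$, i.e. $ab=ba$.
\end{proof}
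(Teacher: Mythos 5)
Your proof is correct and is exactly the intended argument: the corollary is stated in the paper without proof precisely because it follows immediately from the preceding Proposition (every idempotent is selfadjoint) combined with Proposition \ref{Prop23} (selfadjoint idempotents commute). Your extra care in passing to a symmetric representative via the averaging trick is a reasonable and valid way to match the exact hypotheses $d^*=d$, $\rho^*=\rho$ of Proposition \ref{Prop23}.
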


Recall that a semigroup $S$ is an inverse semigroup if for any $a\in S$ there exists a unique $b\in S$ such that $a=aba$ and $b=bab$ (\cite{Lawson}, p. 6).

\begin{thm}\label{T-inv}
$M(X)$ is an inverse semigroup.

\end{thm}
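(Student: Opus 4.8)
The plan is to invoke the standard characterization of inverse semigroups: a regular semigroup in which all idempotents commute is an inverse semigroup (see \cite{Lawson}, Theorem 1.1.3). We have already shown in Section~2 that $M(X)$ is regular (taking $b=[d^*]$ as a generalized inverse of $[d]$), and the preceding corollary establishes that any two idempotents of $M(X)$ commute. So the theorem follows immediately from this classical fact.

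For completeness I would recall \emph{why} the combination of regularity and commuting idempotents yields uniqueness of the generalized inverse, rather than merely citing it. Given $a\in M(X)$, suppose $b$ and $c$ both satisfy $a=aba=aca$ and $b=bab$, $c=cac$. Then $ab$, $ba$, $ac$, $ca$ are all idempotents: for instance $(ab)(ab)=(aba)b=ab$. Using that idempotents commute, one computes
$$
b=bab=b(aca)b=(ba)(ca)b=(ca)(ba)b=cab(ab)=ca(bab)=cab=c(aba)b=(ca)(ba)\cdots
$$
— the routine manipulation, pushing the equal idempotents $ab=ca\cdot(\text{stuff})$ past one another, collapses $b$ to $c$. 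I would write out this short chain of equalities carefully, since it is the only genuine content beyond what the corollaries already supply, and it is exactly the place where commutativity of idempotents is used.

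There is no real obstacle here: the two substantive ingredients — regularity via the involution, and commutativity of idempotents — were the hard work, carried out in Propositions~\ref{inv-semi}, \ref{xx'}, \ref{Prop23} and the idempotent-selfadjointness proposition of Section~5. The present theorem is the formal harvest. If I wanted to be fully self-contained I would simply reproduce the four-line uniqueness argument above; otherwise a one-sentence proof citing \cite{Lawson} and the corollary on commuting idempotents suffices.
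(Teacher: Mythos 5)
Your proposal is correct and is essentially identical to the paper's proof, which likewise just invokes the characterization of inverse semigroups as regular semigroups with commuting idempotents (citing \cite{Lawson}) together with the already-established regularity and commutativity of idempotents in $M(X)$. The optional uniqueness argument you sketch is the standard one (though your displayed chain of equalities is garbled as written and would need to be cleaned up, e.g.\ $b=bab=b(aca)b=(ba)(ca)b=(ca)(ba)b=cab$ and $c=cac=c(ab)(ac)=c(ac)(ab)=cab$), but the paper does not include it, so nothing further is required.
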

\begin{proof}
In a regular semigroup, commutativity of idempotents is equivalent to being an inverse semigroup (\cite{Lawson}, Theorem 3).

\end{proof}

By Theorem \ref{T-inv}, we can define the (reduced) semigroup $C^*$-algebra $C^*_r(M(X))$ of the inverse semigroup $M(X)$ (\cite{Paterson}, Section 4.4).
 
Recall that if $a\in M(X)$, $V_a=\{b\in M(X):bb^*\preceq a^*a\}$, then the map $b\mapsto ab$ is injective on $V_a$.

Let $l_2(M(X))$ denote the Hilbert space of square-summable functions on $M(X)$ (as a discrete space) 
with the orthonormal basis (often uncountable) of delta-functions $\delta_b(c)=\left\lbrace\begin{array}{cl}1,&\mbox{if\ }c=b;\\
0,&\mbox{if\ }c\neq b,\end{array}\right.$ $b,c\in M(X)$.
For $a\in M(X)$, set 
$$
\lambda_a(\delta_b)=\left\lbrace\begin{array}{cl}\delta_{ab},&\mbox{if\ }b\in V_a;\\0,&\mbox{if\ }b\notin V_a.\end{array}\right.
$$
Then $\lambda_a$ is a partial isometry for any $a\in M(X)$, and the $C^*$-algebra $C^*_r(M(X))$ generated by all $\lambda_a$,
$a\in M(X)$, is the reduced $C^*$-algebra of $M(X)$.

There is a special projection $[e_0]$ in $C^*_r(M(X))$, given by the metric $d_{\{x_0\}}$ for some $x_0\in X$. 
By definition, $e_0(x,y')=d_X(x,x_0)+1+d_X(x_0,y)$, and it is easy to see that
the equivalence class $[e_0]$ does not depend on $x_0$.

\begin{lem}\label{zero}
$d\circ e_0$ and $e_0\circ d$ are equivalent to $e_0$ for any metric $d$ on the double of $X$.

\end{lem}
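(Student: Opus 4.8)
The plan is to show both $d \circ e_0 \sim e_0$ and $e_0 \circ d \sim e_0$ by direct two-sided estimates, using the explicit formula $e_0(x,y') = d_X(x,x_0) + 1 + d_X(x_0,y)$ for a fixed basepoint $x_0 \in X$. Since $e_0$ is selfadjoint, the two equivalences are adjoint to each other: $e_0 \circ d = (d^* \circ e_0)^*$ and $d^*$ is again a metric on the double, so it suffices to prove $d \circ e_0 \sim e_0$ for an arbitrary metric $d$.

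Write out $d \circ e_0(x,z') = \inf_{y \in X}[e_0(x,y') + d(y,z')] = \inf_{y \in X}[d_X(x,x_0) + 1 + d_X(x_0,y) + d(y,z')]$. The term $d_X(x,x_0)$ does not depend on $y$ and can be pulled out of the infimum, leaving $d \circ e_0(x,z') = d_X(x,x_0) + 1 + \inf_{y \in X}[d_X(x_0,y) + d(y,z')]$. The inner infimum is exactly $(d \circ e_{\{x_0\}}^{\,Y})$-type quantity; more concretely, observe that $\inf_{y}[d_X(x_0,y) + d(y,z')] \geq d(x_0,z')$ by the triangle inequality on the double (with $x_0$ in the first copy and $z'$ in the second), and $\leq d(x_0,z')$ by taking $y = x_0$. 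Hence the inner infimum equals $d(x_0,z')$, and we get the clean identity
\begin{equation*}
d \circ e_0(x,z') = d_X(x,x_0) + 1 + d(x_0,z').
\end{equation*}

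Now compare this with $e_0(x,z') = d_X(x,x_0) + 1 + d_X(x_0,z)$. The first two summands agree, so equivalence reduces to showing that $z \mapsto d(x_0,z')$ and $z \mapsto d_X(x_0,z)$ are equivalent functions (uniformly in $z$, with constants independent of $x$, which is automatic here). For the upper bound, the triangle inequality on the double gives $d(x_0,z') \leq d(x_0,x_0') + d_X(x_0',z') = d(x_0,x_0') + d_X(x_0,z)$, so with $\alpha_0 := d(x_0,x_0')$ a fixed constant we get $d(x_0,z') \leq \alpha_0 + d_X(x_0,z)$. For the lower bound, similarly $d_X(x_0,z) = d_X(x_0',z') \leq d_X(x_0',x_0) + d(x_0,z')$ — wait, that needs $d(x_0, z') \ge d_X(x_0,z) - \alpha_0$, i.e. $d_X(x_0,z) \le \alpha_0 + d(x_0,z')$, which is exactly the same triangle inequality read the other way. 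Combining, $|d(x_0,z') - d_X(x_0,z)| \leq \alpha_0$ for all $z$, so the two functions differ by a bounded amount, hence are equivalent with $\alpha = \alpha_0$, $\beta = 1$. Therefore $d \circ e_0 \sim e_0$, and by the adjoint remark $e_0 \circ d \sim e_0$ as well.

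I do not expect a serious obstacle here: the only mild subtlety is making sure the constant $\alpha_0 = d(x_0,x_0')$ is genuinely independent of the running variables $x,z$ — which it is, since $x_0$ is fixed once and for all — and that pulling $d_X(x,x_0)$ out of the infimum is legitimate, which it is because that term has no $y$-dependence. Everything else is a routine application of the triangle inequality on the double in the two possible directions.
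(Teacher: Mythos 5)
Your proof is correct and takes essentially the same route as the paper's: both hinge on evaluating the infimum at $y=x_0$ for the upper bound and using the triangle inequality for the lower bound, arriving at the same two-sided estimate with constant $d(x_0,x_0')$. Your exact identity $d\circ e_0(x,z')=d_X(x,x_0)+1+d(x_0,z')$ and the reduction of $e_0\circ d$ to $d^*\circ e_0$ via the involution are tidy refinements of, not departures from, the paper's argument (which simply says ``similarly'' for the second composition).
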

\begin{proof}
Take $y=x_0$, and then use the triangle inequality to obtain
\begin{eqnarray*}
e_0d(x,z')&=&\inf_{y\in X}[d(x,y')+d_X(y,x_0)+d_X(z,x_0)+1]\leq d(x,x'_0)+d_X(z,x_0)+1\\
&\leq& d_X(x,x_0)+d(x_0,x'_0)+d_X(z,x_0)+1\leq e_0(x,z')+d(x_0,x'_0).
\end{eqnarray*}

On the other hand, by the triangle inequality,
\begin{eqnarray}\label{inf}
d(x,y')+d_X(y,x_0)+d_X(z,x_0)+1&=&d(x,y')+d_X(y',x'_0)+d_X(z,x_0)+1\nonumber\\
&\geq& d(x,x'_0)+d_X(z,x_0)+1. 
\end{eqnarray}
Passing to the infimum in (\ref{inf}) with respect to $y\in X$, we obtain $e_0d(x,z')\geq d(x,x'_0)+d_X(z,x_0)+1$. Another triangle inequality gives
\begin{eqnarray*}
e_0d(x,z')&\geq& d(x,x'_0)+d_X(z,x_0)+1\geq d_X(x,x_0)-d(x_0,x'_0)+d_X(z,x_0)+1\\&=&e_0(x,z')-d(x_0,x'_0).
\end{eqnarray*}
Thus, $e_0(x,z')-\alpha\leq e_0d(x,z')\leq e_0(x,z')+\alpha$ for any $x,z\in X$, where $\alpha=d(x_0,x'_0)$, hence $e_0d\sim e_0$. Similarly, $de_0\sim e_0$.

\end{proof}
Thus, $[e_0]$ is the zero element in $M(X)$.

\begin{prop}\label{minimalprojection}
The set $V_{e_0}$ consists of a single element $[e_0]$.

\end{prop}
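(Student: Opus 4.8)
The plan is to show $V_{e_0}=\{[e_0]\}$ by first identifying $e_0^*e_0$ up to equivalence, then analyzing which idempotents lie below it, and finally pinning down the metrics $b$ with $bb^*\preceq e_0^*e_0$. First I would observe that $e_0$ is selfadjoint, so $e_0^*e_0\sim e_0^2\sim e_0$ by the idempotent property (Lemma~\ref{zero} already gives $e_0 d\sim e_0$, so in particular $e_0^2\sim e_0$). Hence $V_{e_0}=\{[b]\in M(X):[b][b^*]\preceq[e_0]\}$, and the task reduces to: if $d$ is a metric on the double of $X$ with $[d d^*]\preceq[e_0]$, then $[d]=[e_0]$.

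Next I would unpack the order relation. Since $[e_0]$ is the zero element of the inverse semigroup $M(X)$ (Lemma~\ref{zero}), and in an inverse semigroup the zero is the bottom of the natural partial order on idempotents, the condition $[dd^*]\preceq[e_0]$ forces $[dd^*]=[e_0]$. Indeed $dd^*$ is a selfadjoint idempotent, so by the Lemma in the ``Order structure'' section, $dd^*\preceq e_0$ is equivalent to $[e_0][dd^*]=[dd^*]$; but $[e_0]$ is zero so $[e_0][dd^*]=[e_0]$, giving $[dd^*]=[e_0]$. Now I would use Proposition~\ref{xx'}: the selfadjoint idempotents $dd^*$ and $e_0$ are equivalent iff the functions $x\mapsto (dd^*)(x,x')$ and $x\mapsto e_0(x,x')=2d_X(x,x_0)+1$ are equivalent. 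Since $(dd^*)(x,x')=\inf_{y}[d(x,y')+d(x,y')]=2d(x,X')$, we get that $d(x,X')$ is equivalent to $d_X(x,x_0)$, i.e. there are $\alpha\geq 0,\beta\geq 1$ with $d(x,X')\leq \alpha+\beta d_X(x,x_0)$ and $d_X(x,x_0)\leq \alpha+\beta d(x,X')$.

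The main step is then to conclude $[d]=[e_0]$ from this control on $d(x,X')$ together with $d$ being (the representative of an element with) $dd^*\sim e_0$. The idea is that $d(x,X')$ being comparable to $d_X(x,x_0)$ means the point of $X$ ``closest'' to $x$ across the gap is essentially always near $x_0$, so $d(x,z')$ should be comparable to $d_X(x,x_0)+1+d_X(z,x_0)=e_0(x,z')$. For the upper bound, pick $y_\varepsilon$ nearly achieving $d(x,X')$; then $d(y_\varepsilon,x_0)\leq$ something bounded (using $dd^*\sim e_0$ applied to $y_\varepsilon$, or the triangle inequality $d_X(y_\varepsilon,x_0)\le \alpha+\beta d(y_\varepsilon,X')\le \alpha+\beta d(y_\varepsilon,x')\le \alpha+\beta(d(y_\varepsilon,x)+d(x,X')+\varepsilon)$ — wait, that is not bounded; instead one uses that $d(y_\varepsilon,x_0)$ is comparable to $d(y_\varepsilon,X')$ which is $\le d(y_\varepsilon,x')$, but the cleanest route is to note $d(x,z')\le d(x,X')+\text{(path through a near-}x_0\text{ point)}$). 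Concretely: $d(x,z')\le d(x,y_0')+d_X(y_0',x_0')+d_X(x_0,z)$ won't help directly since $y_0$ need not be $x_0$. The honest approach is: $d(x,z') \le d(x,x_0') + d_X(x_0,z)$ — no. Let me instead use $e_0\sim dd^*d$ is false; rather use that $[d]\preceq[e_0][d]=[e_0]$ and $[d]=[dd^*d]\succeq$ ... . Cleanly: from $dd^*\sim e_0$ and $e_0$ zero, $[d]=[dd^*d]\sim[dd^*][d]\sim[e_0][d]\sim[e_0]$ by Lemma~\ref{zero}, where the first equivalence is Proposition~\ref{inv-semi}. This is the slick finish, and it shows $[b]=[e_0]$ directly, so $V_{e_0}=\{[e_0]\}$.

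The hard part will be making sure the chain $[d]\sim[dd^*][d]$ is legitimate — it is exactly Proposition~\ref{inv-semi} ($d\sim d d^* d$) combined with associativity — and that $[dd^*]\preceq[e_0]$ genuinely upgrades to $[dd^*]=[e_0]$ rather than something strictly smaller; this uses that $[e_0]$ is the \emph{zero} of $M(X)$ (Lemma~\ref{zero}), since the zero is minimal in the natural order, so the only idempotent $\preceq[e_0]$ is $[e_0]$ itself. Once that is in hand, every element of $V_{e_0}$ equals $[e_0]$ and we are done. I would write it in this order: (i) $[e_0]$ is zero and idempotent; (ii) $[b]\in V_{e_0}\Rightarrow [bb^*]\preceq[e_0]\Rightarrow[bb^*]=[e_0]$; (iii) $[b]=[bb^*b]=[bb^*][b]=[e_0][b]=[e_0]$.
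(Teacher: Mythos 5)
Your final argument --- $[e_0]$ is the zero and an idempotent, so $[bb^*]\preceq[e_0]$ forces $[bb^*]=[e_0]$ via the order-structure lemma, and then $[b]=[bb^*b]=[e_0][b]=[e_0]$ by Proposition~\ref{inv-semi} and Lemma~\ref{zero} --- is exactly the paper's proof. The exploratory detour through Proposition~\ref{xx'} in the middle is unnecessary, but the clean three-step version you end with is correct and identical in approach.
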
 
\begin{proof}
Let $s\in V_{[e_0]}$. Then $ss^*\preceq[e_0]$, hence $ss^*=ss^*[e_0]=[e_0]$. Then $s=ss^*s=[e_0]s=[e_0]$.

\end{proof}

\begin{cor}
$\lambda_{[e_0]}$ is a rank one projection in $C^*_r(M(X))$.

\end{cor}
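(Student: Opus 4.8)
The plan is to read off the action of $\lambda_{[e_0]}$ directly on the canonical orthonormal basis $\{\delta_b : b\in M(X)\}$ of $l_2(M(X))$, using Proposition \ref{minimalprojection}. By that proposition, $V_{[e_0]}=\{[e_0]\}$, so the defining formula for $\lambda_{[e_0]}$ gives $\lambda_{[e_0]}(\delta_b)=0$ for every $b\in M(X)$ with $b\neq[e_0]$, while $\lambda_{[e_0]}(\delta_{[e_0]})=\delta_{[e_0]\cdot[e_0]}=\delta_{[e_0]}$, where the last equality uses that $[e_0]$ is an idempotent. Hence $\lambda_{[e_0]}$ is precisely the orthogonal projection of $l_2(M(X))$ onto the one-dimensional subspace $\mathbb{C}\delta_{[e_0]}$.

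It then remains only to check that this operator is a projection in the $C^*$-algebraic sense, i.e. a selfadjoint idempotent. Idempotency is immediate from the description just given. Selfadjointness can be seen directly, since the orthogonal projection onto a line is selfadjoint; alternatively, one may note $e_0^*(x,y')=e_0(y,x')=d_X(y,x_0)+1+d_X(x_0,x)=e_0(x,y')$, so $[e_0]^*=[e_0]$, and therefore $\lambda_{[e_0]}^*=\lambda_{[e_0]^*}=\lambda_{[e_0]}$. Since $\lambda_{[e_0]}$ lies in $C^*_r(M(X))$ by construction and has one-dimensional range, it is a rank one projection there.

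I do not expect any real obstacle: the statement is an immediate consequence of Proposition \ref{minimalprojection}, and the only point meriting a line of care is confirming $[e_0]^*=[e_0]$ from the explicit symmetric formula for $e_0$, so that $\lambda_{[e_0]}$ is genuinely selfadjoint and not merely idempotent.
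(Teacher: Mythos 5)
Your argument is correct and is exactly the intended one: the paper leaves this corollary as an immediate consequence of Proposition \ref{minimalprojection}, and your unwinding of the definition of $\lambda_{[e_0]}$ (killing $\delta_b$ for $b\neq[e_0]$ and fixing $\delta_{[e_0]}$ since $[e_0]$ is idempotent) is the whole content. The extra check that $e_0^*=e_0$ is a fine, if not strictly necessary, confirmation since the operator is visibly the orthogonal projection onto $\mathbb{C}\delta_{[e_0]}$.
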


\begin{cor}
There is a direct sum decomposition of $C^*$-algebras $C^*_r(M(X))=C^*_0(M(X))\oplus\lambda_{e_0}\mathbb C$, where $C^*_0(M(X))=
\{\lambda_a: a\in C^*_r(M(X)),a[e_0]=[e_0]a=0\}$.

\end{cor}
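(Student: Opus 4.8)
The plan is to exhibit $\lambda_{[e_0]}$ as a central projection that, moreover, absorbs the whole algebra multiplicatively, and then to read off the splitting. The main task is to verify that
$$
\lambda_a\lambda_{[e_0]}=\lambda_{[e_0]}\lambda_a=\lambda_{[e_0]}\qquad\text{for every }a\in M(X).
$$
By Proposition \ref{minimalprojection} and the corollary following it, $\lambda_{[e_0]}$ is the rank-one projection of $l_2(M(X))$ onto $\mathbb C\,\delta_{[e_0]}$, so it suffices to evaluate both composites on basis vectors $\delta_b$. For $\lambda_a\lambda_{[e_0]}$ I first note that $[e_0]\in V_a$ for every $a$: indeed $[a^*a]$ is a selfadjoint idempotent and $[a^*a][e_0]=[e_0]$ because $[e_0]$ is the zero of $M(X)$ (Lemma \ref{zero}), so the order lemma of Section 4 gives $[e_0]=[e_0][e_0]^*\preceq[a^*a]$, i.e. $[e_0]\in V_a$; hence $\lambda_a(\delta_{[e_0]})=\delta_{a[e_0]}=\delta_{[e_0]}$ and $\lambda_a\lambda_{[e_0]}=\lambda_{[e_0]}$. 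For $\lambda_{[e_0]}\lambda_a$ I would show that $ab=[e_0]$ together with $b\in V_a$ forces $b=[e_0]$: multiplying $ab=[e_0]$ on the left by $a^*$ gives $(a^*a)b=[e_0]$, and since $b\in V_a$ means $[bb^*]\preceq[a^*a]$, i.e. $(a^*a)(bb^*)=bb^*$ for these selfadjoint idempotents, one gets $bb^*=(a^*a)(bb^*)=[e_0]$, whence $b=bb^*b=[e_0]$. Consequently $\lambda_{[e_0]}\lambda_a(\delta_b)$ equals $\delta_{[e_0]}$ exactly when $b=[e_0]$, so $\lambda_{[e_0]}\lambda_a=\lambda_{[e_0]}$ as well.

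Granting the displayed identity, on the dense $*$-subalgebra linearly spanned by the $\lambda_a$ one has $\bigl(\sum_i c_i\lambda_{a_i}\bigr)\lambda_{[e_0]}=\lambda_{[e_0]}\bigl(\sum_i c_i\lambda_{a_i}\bigr)=\bigl(\sum_i c_i\bigr)\lambda_{[e_0]}$. Since $\lambda_{[e_0]}\neq 0$, the scalar $\sum_i c_i$ is determined by the operator, so $x\mapsto\chi(x)$, "the scalar $\sum_i c_i$", is a well-defined linear functional with $|\chi(x)|=\|\lambda_{[e_0]}x\|\le\|x\|$; it extends to a bounded functional $\chi$ on $C^*_r(M(X))$ satisfying $x\lambda_{[e_0]}=\lambda_{[e_0]}x=\chi(x)\lambda_{[e_0]}$ for all $x$, with $\chi(\lambda_{[e_0]})=1$, and $\chi(xy)\lambda_{[e_0]}=xy\lambda_{[e_0]}=\chi(x)\chi(y)\lambda_{[e_0]}$ shows $\chi$ is a character. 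Now $C^*_0(M(X))=\{x:x\lambda_{[e_0]}=\lambda_{[e_0]}x=0\}=\ker\chi$ is a closed two-sided ideal of codimension one, $\lambda_{[e_0]}\mathbb C=\mathbb C\lambda_{[e_0]}$ is a one-dimensional ideal, the two intersect trivially (as $\chi$ is nonzero on $\mathbb C\lambda_{[e_0]}$) and annihilate each other (as $x\lambda_{[e_0]}=\chi(x)\lambda_{[e_0]}=0$ for $x\in\ker\chi$), and the identity $x=(x-\chi(x)\lambda_{[e_0]})+\chi(x)\lambda_{[e_0]}$ exhibits every $x$ in the sum. Hence $C^*_r(M(X))=C^*_0(M(X))\oplus\lambda_{[e_0]}\mathbb C$ as $C^*$-algebras.

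I expect the only genuinely delicate point to be the second half of the displayed identity, namely that $\lambda_{[e_0]}\lambda_a$ has no off-diagonal contribution; this rests on the implication ``$ab=[e_0]$ and $b\in V_a$'' $\Rightarrow$ ``$b=[e_0]$'', which in turn uses the order lemma of Section 4 and the fact that $[e_0]$ is a two-sided zero in $M(X)$. Everything afterwards — the construction of the character $\chi$ and the direct-sum decomposition — is formal.
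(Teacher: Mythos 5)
Your argument is correct, and it uses exactly the ingredients the paper sets up for this corollary (which is stated there without proof): Lemma \ref{zero} making $[e_0]$ a two-sided zero, Proposition \ref{minimalprojection} making $\lambda_{[e_0]}$ rank one, and the order lemma of Section 4 to verify both $[e_0]\in V_a$ and the implication ``$ab=[e_0]$, $b\in V_a$ $\Rightarrow$ $b=[e_0]$''. The resulting identity $\lambda_a\lambda_{[e_0]}=\lambda_{[e_0]}\lambda_a=\lambda_{[e_0]}$ and the character $\chi$ give the splitting exactly as intended, so this is a faithful completion of the paper's omitted proof rather than a different route.
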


Let $A,B\subset X$, let $d_X(A,B)=\inf_{x\in A;y\in B}d_X(x,y)$, and let $B_R(x_0)$ denote the ball of radius $R$ centered at $x_0\in X$.

\begin{prop}\label{AB0}
Suppose that there exists $\beta\geq 1$ such that
\begin{equation}\label{*}
d_X(A\setminus B_R(x_0),B\setminus B_R(x_0))>\frac{1}{\beta} R.
\end{equation}
Then $[d_Ad_B]=[e_0]$.

\end{prop}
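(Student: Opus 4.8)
The plan is to show directly that the metric $d_A d_B$ on the double of $X$ is equivalent to $e_0$, by estimating $d_A d_B(x,z')$ from above and below and matching both bounds against $e_0(x,z') = d_X(x,x_0) + 1 + d_X(x_0,z)$. Since $[e_0]$ is the zero element (Lemma \ref{zero}), the upper bound $d_A d_B(x,z') \leq \alpha + \beta' e_0(x,z')$ would follow from general nonsense once we know $d_A d_B \ne I$; but it is cleaner to prove both inequalities by hand. Recall that
$$
d_A d_B(x,z') = \inf_{y \in X}\bigl[d_B(x,y') + d_A(y,z')\bigr],
$$
and that $d_B(x,y') = \inf_{u \in B}[d_X(x,u) + 1 + d_X(u,y)]$, $d_A(y,z') = \inf_{v \in A}[d_X(y,v) + 1 + d_X(v,z)]$. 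So, unwinding,
$$
d_A d_B(x,z') = \inf_{u \in B,\, v \in A}\Bigl[d_X(x,u) + 2 + d_X(u,v) + d_X(v,z) + \inf_{y}\bigl(d_X(u,y)+d_X(y,v)\bigr) - d_X(u,v)\Bigr],
$$
which simplifies: taking $y$ on a near-geodesic between $u$ and $v$ (or just $y = u$) gives $d_A d_B(x,z') = \inf_{u\in B, v\in A}[d_X(x,u) + d_X(u,v) + d_X(v,z)] + 2$ up to the harmless additive constant from the two $+1$'s. The upper bound is then immediate by choosing a nearest point $u_0 \in B$ to $x_0$ and $v_0 \in A$ to $x_0$: $d_A d_B(x,z') \le d_X(x,x_0) + d_X(x_0,u_0) + d_X(u_0,v_0) + d_X(v_0,x_0) + d_X(x_0,z) + 2 \le e_0(x,z') + C_0$ where $C_0$ depends only on $d_X(x_0,A)$, $d_X(x_0,B)$.

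For the lower bound, which is where hypothesis (\ref{*}) enters, fix $\varepsilon > 0$ and choose $u \in B$, $v \in A$ nearly realizing the infimum, so that $d_A d_B(x,z') + \varepsilon \ge d_X(x,u) + d_X(u,v) + d_X(v,z) + 2$. The triangle inequality already gives the trivial bound $\ge d_X(x,z)$, hence $\ge \tfrac12(d_X(x,x_0) + d_X(x_0,z)) - d_X(x,x_0) \cdots$ — no, rather we want $\ge \tfrac1{\text{const}} e_0(x,z')$, so we need to recover $d_X(x,x_0)$ and $d_X(z,x_0)$ from the sum. The point is to bound $d_X(x,u) \ge d_X(x,x_0) - d_X(x_0,u)$ and $d_X(v,z) \ge d_X(z,x_0) - d_X(x_0,v)$, so
$$
d_A d_B(x,z') + \varepsilon \ge d_X(x,x_0) + d_X(z,x_0) + d_X(u,v) - d_X(x_0,u) - d_X(x_0,v).
$$
This is useless unless $d_X(u,v)$ is large enough to absorb $d_X(x_0,u) + d_X(x_0,v)$. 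That is exactly what (\ref{*}) provides: set $R = \max(d_X(x_0,u), d_X(x_0,v))$; if $R$ is larger than $d_X(x_0,A) + d_X(x_0,B)$ (say), then $u \notin B_R(x_0)$ or $v \notin B_R(x_0)$ forces — after a small case analysis — $d_X(u,v) > \tfrac1\beta R \ge \tfrac1{2\beta}(d_X(x_0,u) + d_X(x_0,v))$, which together with (say) $d_X(x,u) \ge \tfrac12 d_X(x,x_0) - \tfrac12 d_X(x_0,u) + \cdots$ lets us conclude $d_A d_B(x,z') \ge \tfrac1{C}(d_X(x,x_0) + d_X(z,x_0)) - C'$ for suitable constants. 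When $R$ is bounded by the fixed constant $d_X(x_0,A)+d_X(x_0,B)$, we instead use $d_A d_B(x,z') \ge d_X(x,u) + d_X(v,z) \ge d_X(x,x_0) + d_X(z,x_0) - 2R$ directly.

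The main obstacle is organizing the lower-bound case analysis cleanly: the near-optimal $u, v$ depend on $x, z$, and one must handle the regime where both $u$ and $v$ are close to $x_0$ (trivial, bounded error) separately from the regime where at least one escapes every ball, where (\ref{*}) with the right choice of $R$ must be invoked to show $d_X(u,v)$ grows linearly with $\max(d_X(x_0,u), d_X(x_0,v))$. A secondary technical point is that (\ref{*}) is stated for the sets with the ball removed, so one should take $R$ slightly below $\max(d_X(x_0,u), d_X(x_0,v))$ to ensure $u \in A \setminus B_R(x_0)$ or $v \in B \setminus B_R(x_0)$ as appropriate, and then let that $R$ tend to the max; since the bound is linear in $R$ this passage to the limit is harmless. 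Once both bounds are in hand, $[d_A d_B] = [e_0]$ follows from the definition of equivalence of metrics.
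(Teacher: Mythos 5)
Your route is genuinely different from the paper's. The paper first observes that $[d_Ad_B]$ is a selfadjoint idempotent (as a product of two commuting selfadjoint idempotents) and invokes Proposition \ref{xx'} to reduce everything to comparing the diagonal functions $x\mapsto d_Ad_B(x,x')$ and $x\mapsto e_0(x,x')$; moreover it gets one of the two inequalities for free from the minimality of $[e_0]$ (Proposition \ref{minimalprojection}), so only the single estimate $e_0(x,x')\leq 4\beta\, d_Ad_B(x,x')+1$ has to be proved, which it does by a short two-case argument on whether the near-optimal points lie in $B_R(x_0)$. You instead prove both quasi-isometry inequalities for all off-diagonal pairs $(x,z')$ directly, which is more self-contained (it does not rely on Propositions \ref{Prop23}, \ref{xx'}, \ref{minimalprojection}) but correspondingly longer. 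Your unwinding of $d_Ad_B$ and your upper bound are correct.

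There is, however, a concrete error in your key lower-bound step. With $R=\max\bigl(d_X(x_0,u),d_X(x_0,v)\bigr)$ you cannot conclude $d_X(u,v)>\tfrac1\beta R$ from (\ref{*}): the hypothesis only controls pairs in which \emph{both} points lie outside $B_R(x_0)$, and with $R$ equal to the maximum the point realizing the minimum may sit well inside the ball, in which case (\ref{*}) says nothing (e.g.\ $\beta=1$, $d_X(x_0,u)=100$, $d_X(x_0,v)=60$ is consistent with $d_X(u,v)=61$, well below $\tfrac1\beta R=100$ and also below your claimed $\tfrac1{2\beta}(r_u+r_v)=80$). The correct combination is: apply (\ref{*}) at radius slightly below $\min(r_u,r_v)$ to get $d_X(u,v)\geq\tfrac1\beta\min(r_u,r_v)$, and combine with the triangle inequality $d_X(u,v)\geq|r_u-r_v|$; taking the maximum of the two bounds yields $d_X(u,v)\geq\tfrac1{\beta+1}\max(r_u,r_v)\geq\tfrac1{2(\beta+1)}(r_u+r_v)$, which is weaker than what you wrote but still linear, hence sufficient. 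A second point needing care is the absorption of the error terms $-r_u-r_v$: your inequality $d_Ad_B(x,z')+\varepsilon\geq d_X(x,x_0)+d_X(z,x_0)+d_X(u,v)-r_u-r_v$ (with coefficient $1$ on $d_X(x,u)\geq p-r_u$) leaves a net negative coefficient on $r_u+r_v$, since $\tfrac1{2(\beta+1)}<1$. You should instead use the weighted bound $d_X(x,u)\geq\lambda(d_X(x,x_0)-r_u)$ with $\lambda=\tfrac1{2(\beta+1)}$ (valid because $d_X(x,u)\geq\max(d_X(x,x_0)-r_u,0)$), which makes the $r_u+r_v$ terms cancel exactly and gives $d_Ad_B(x,z')\geq\tfrac1{2(\beta+1)}e_0(x,z')-C'$. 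With these two repairs your argument goes through.
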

\begin{proof}
By Proposition \ref{xx'}, it suffices to compare $d_Ad_B(x,x')$ and $e_0(x,x')$, $x\in X$. Recall that
$$
d_Ad_B(x,x')=\inf_{u\in A,v\in B,y\in X}[d_X(x,u)+d_X(u,y)+d_X(y,v)+d_X(v,x)+2], 
$$
$$
e_0(x,x')=2d_X(x,x_0)+1.
$$
By Proposition \ref{minimalprojection}, $[e_0]\preceq[d_Ad_B]$, so it remains to show that $[d_Ad_B]\preceq[e_0]$.

Set $L=\frac{1}{\beta}R$. Take $x\in X$, and let $R$ satisfy $x\in B_{2R}(x_0)$ and $x\notin B_{R+L}(x_0)$. Then
\begin{equation}\label{AB1}
e_0(x,x_0)=2d(x,x_0)+1\leq 4R+1.
\end{equation}
Now let us estimate $d_Ad_B(x,x')$. By definition, there exist $u_0\in A$, $v_0\in B$, $y_0\in X$ such that
$$
d_Ad_B(x,x')\geq d_X(x,u_0)+d_X(u_0,y_0)+d_X(y_0,v_0)+d_X(v_0,x).
$$

Consider the two cases: 

(a) either $u_0\in B_R(x_0)$ or $v_0\in B_R(x_0)$;

(b) $u_0,v_0\notin B_R(x_0)$.

In the case (a), if $u_0\in B_R(x_0)$ and $x\notin B_{R+L}(x_0)$ then $d_X(x,u_0)\geq L$. Otherwise, if $v_0\in B_R(x_0)$ then $d_X(v_0,x)\geq L$. Thus, $d_Ad_B(x,x')\geq L$.

In the case (b), by the triangle inequality and by (\ref{*}),
\begin{eqnarray*}
d_Ad_B(x,x')&\geq& d_X(x,u_0)+d_X(u_0,y_0)+d_X(y_0,v_0)+d_X(v_0,x)\\
&\geq& d_X(x,u_0)+d_X(u_0,v_0)+d_X(v_0,x)
\geq d_X(u_0,v_0)\geq L.
\end{eqnarray*}
Thus, in both cases we have 
\begin{equation}\label{AB2}
d_Ad_B(x,x')\geq L=\frac{1}{\beta}R.
\end{equation}
Combining (\ref{AB1}) and (\ref{AB2}), we get $e_0(x,x')\leq 4\beta d_Ad_B(x,x')+1$, hence $[d_Ad_B]\preceq[e_0]$.

\end{proof}

\begin{cor}
Under the assumption of Proposition \ref{AB0}, $(\lambda_{[d_A]}-\lambda_{[e_0]})(\lambda_{[d_B]}-\lambda_{[e_0]})=0$, i.e., the projections $\lambda_{[d_A]}-\lambda_{[e_0]}$ and $\lambda_{[d_B]}-\lambda_{[e_0]}$ are mutually orthogonal.

\end{cor}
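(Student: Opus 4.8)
The plan is to deduce the corollary directly from Proposition \ref{AB0} together with the general structure of the representation $\lambda$ of the inverse semigroup $M(X)$. First I would record the standard facts about $\lambda$: it is a $*$-homomorphism on the image, so $\lambda_{[e_0]}$ is a selfadjoint projection (being the image of the idempotent $[e_0]$), each $\lambda_{[d_A]}$, $\lambda_{[d_B]}$ is a selfadjoint projection because $[d_A]$, $[d_B]$ are selfadjoint idempotents, and multiplication of the $\lambda$'s is compatible with multiplication in $M(X)$ in the sense that $\lambda_a\lambda_b=\lambda_{ab}$ whenever the relevant domains match up, in particular for products of idempotents. Since by Lemma \ref{zero} the class $[e_0]$ is the zero of the semigroup, we have $[d_A][e_0]=[e_0][d_A]=[e_0]$ and likewise for $[d_B]$, so $\lambda_{[d_A]}\lambda_{[e_0]}=\lambda_{[e_0]}\lambda_{[d_A]}=\lambda_{[e_0]}$, and $\lambda_{[e_0]}^2=\lambda_{[e_0]}$.

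Next I would expand the product
\[
(\lambda_{[d_A]}-\lambda_{[e_0]})(\lambda_{[d_B]}-\lambda_{[e_0]})
=\lambda_{[d_A]}\lambda_{[d_B]}-\lambda_{[d_A]}\lambda_{[e_0]}-\lambda_{[e_0]}\lambda_{[d_B]}+\lambda_{[e_0]}^2.
\]
By the observations above the last three terms each equal $\lambda_{[e_0]}$, with signs giving $-\lambda_{[e_0]}-\lambda_{[e_0]}+\lambda_{[e_0]}=-\lambda_{[e_0]}$, so the product equals $\lambda_{[d_A]}\lambda_{[d_B]}-\lambda_{[e_0]}=\lambda_{[d_Ad_B]}-\lambda_{[e_0]}$. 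Now I invoke Proposition \ref{AB0}: the hypothesis (\ref{*}) gives $[d_Ad_B]=[e_0]$, hence $\lambda_{[d_Ad_B]}=\lambda_{[e_0]}$, and the product vanishes. Finally, since $\lambda_{[d_A]}-\lambda_{[e_0]}$ and $\lambda_{[d_B]}-\lambda_{[e_0]}$ are differences of selfadjoint projections with $\lambda_{[e_0]}\leq\lambda_{[d_A]}$ and $\lambda_{[e_0]}\leq\lambda_{[d_B]}$ (because $[e_0]\preceq[d_A]$, $[e_0]\preceq[d_B]$ by Proposition \ref{minimalprojection} and the order lemma of Section 4), they are themselves projections, and two projections whose product is zero are mutually orthogonal; this justifies the final phrasing of the statement.

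The only genuinely delicate point is the bookkeeping about when $\lambda_a\lambda_b=\lambda_{ab}$ holds, since in an inverse semigroup $C^*$-algebra this identity is not automatic for all pairs — it requires controlling the domains $V_a$. The clean way around this is to note that we only ever multiply elements that are comparable to the zero $[e_0]$ or are products thereof, and for idempotents $e,f$ one always has $\lambda_e\lambda_f=\lambda_{ef}$ because idempotents act as projections onto sub-basis sets $V_e$ and the composite action is exactly that of $ef$; alternatively one can simply check the identity on basis vectors $\delta_c$. I would spend a sentence or two verifying this in the idempotent case and then the rest is the elementary algebra above. I expect this domain-compatibility check to be the main (very mild) obstacle; everything else is formal manipulation plus citation of Proposition \ref{AB0}.
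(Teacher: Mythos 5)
Your argument is correct and is exactly the computation the paper leaves implicit: expand the product, use that $[e_0]$ is the zero of $M(X)$ (Lemma \ref{zero}) and the multiplicativity of the left regular representation to reduce everything to $\lambda_{[d_Ad_B]}-\lambda_{[e_0]}$, which vanishes by Proposition \ref{AB0}. The domain-compatibility point you flag is the standard fact that $a\mapsto\lambda_a$ is a $*$-representation of an inverse semigroup, so no further work is needed.
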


\begin{example}
Let $X=\mathbb R^2$ with the standard metric. For $\varphi\in[0,2\pi)$ let $A_\varphi$ be the ray from the origin with the angle $\varphi$ to the polar axis. If $\psi\in[0,2\pi)$, $\psi\neq\varphi$, then the two rays $A_\varphi$ and $A_\psi$ satisfy the assumption of Proposition \ref{AB0}, hence the projections $\lambda_{[d_{A_\varphi}]}-\lambda_{[e_0]}$ and $\lambda_{[d_{A_\psi}]}-\lambda_{[e_0]}$ are mutually orthogonal. Thus, the $C^*$-algebra $C^*(M(X))$ has uncountably many mutually orthogonal projections.

\end{example}

\section{Examples}




\begin{prop}
Let $X$ be a closed subset of $[0,\infty)$ with the induced metric. Then any $a\in M(X)$ is a selfadjoint idempotent. Hence $M(X)$ is commutative.

\end{prop}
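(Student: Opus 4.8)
The plan is to prove that every $d\in\mathcal M(X)$ is equivalent to a symmetric metric on the double of $X$ representing an idempotent, reading idempotency off from Theorem~\ref{projections}. Everything reduces to one claim, which I would isolate as a lemma: for every metric $\mu$ on the double of $X$ there are $\alpha\ge 0$ and $\beta\ge 1$ (depending on $\mu$) with $\mu(x,x')\le\alpha+\beta\,\mu(x,y')$ for all $x,y\in X$, equivalently $\mu(x,x')\le\alpha+\beta\,\mu(x,X')$. Call this Lemma~A.

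Granting Lemma~A, here is the argument. Since $X$ is closed and nonempty, $\min X$ exists; translating, I may assume $0=\min X\in X$, and I write $a=d(0,0')$ and $\varphi(x)=d(x,x')=d^*(x,x')$. Lemma~A for $d$ gives $\varphi(x)\le\alpha+\beta\,d(x,y')$ for all $x,y$, and Lemma~A for the metric $d^*$ (then renaming the two points) gives $\varphi(y)\le\alpha+\beta\,d(x,y')$ for all $x,y$. With the triangle inequalities $d(y,x')\le d_X(x,y)+\varphi(x)$ and $d_X(x,y)\le d(x,y')+\varphi(y)$ this yields $d^*(x,y')=d(y,x')\le 2\alpha+(1+2\beta)\,d(x,y')$, and the same argument applied to $d^*$ gives the reverse inequality, so $d\sim d^*$: the class $[d]$ is selfadjoint. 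I then pass to $\widehat d(x,y')=\tfrac12\bigl(d(x,y')+d(y,x')\bigr)$, which lies in $[d]$, satisfies $\widehat d=\widehat d^{\,*}$ and $\widehat d(x,x')=\varphi(x)$; Lemma~A for $\widehat d$ together with Theorem~\ref{projections} gives $[\widehat d^{\,2}]=[\widehat d]$, hence $[d^2]=[d]$. So every element of $M(X)$ is a selfadjoint idempotent, and since any two idempotents of $M(X)$ commute, $M(X)$ is commutative.

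It remains to prove Lemma~A, and this is where the order structure of $[0,\infty)$ is used; I would argue by contradiction. If no such $\alpha,\beta$ work, choose $x_n\in X$ with $\varphi_n:=\mu(x_n,x_n')>n\bigl(1+\mu(x_n,X')\bigr)$, and choose $y_n\in X$ with $\delta_n:=\mu(x_n,y_n')\le\mu(x_n,X')+1$; then $\varphi_n\to\infty$ and $\delta_n<\varphi_n/n$. From $\varphi_n\le\mu(x_n,y_n')+\mu(y_n',x_n')=\delta_n+|x_n-y_n|$ we get $|x_n-y_n|\ge\varphi_n-\delta_n$, which is eventually large. Using the base point $0$: from $\mu(x_n,0')\le\mu(x_n,y_n')+\mu(y_n',0')=\delta_n+y_n$ and $\mu(x_n,0')\ge\mu(x_n,0)-\mu(0,0')=x_n-a$ we get $y_n\ge x_n-a-\delta_n$; since $|x_n-y_n|$ is large this forces $y_n>x_n$ for all large $n$, so $y_n\ge x_n+\varphi_n-\delta_n$. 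Finally, again through $0$: $\mu(0,y_n')\le\mu(0,x_n)+\mu(x_n,y_n')=x_n+\delta_n$ while $\mu(0,y_n')\ge\mu(0',y_n')-\mu(0,0')=y_n-a$, so $y_n\le x_n+\delta_n+a$, contradicting $y_n\ge x_n+\varphi_n-\delta_n$ once $\varphi_n>a+2\delta_n$, which holds for all large $n$.

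The one genuine difficulty is Lemma~A — specifically the step where one exploits that $X$ lies in the linearly ordered half-line: the point $y_n$ realizing (nearly) $\mu(x_n,X')$ must lie on a \emph{definite} side of $x_n$, and then the triangle inequalities through the minimal point $0$ pin $\mu(0,y_n')$ between $y_n-a$ and $x_n+\delta_n$, which are incompatible once $\varphi_n$ is large. The hypothesis on $X$ is essential here: for a space whose geometry is not essentially one-dimensional this fails, and indeed $M(X)$ need not even be commutative. One bookkeeping point worth flagging: $d$ cannot be symmetrized before one knows $[d]$ is selfadjoint, so Lemma~A must be invoked first for $d$ and $d^*$, yielding selfadjointness, and only then for the symmetrization, which with Theorem~\ref{projections} yields idempotency.
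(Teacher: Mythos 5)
Your proof is correct. It reaches the same conclusion as the paper but is organized differently, and the difference is worth noting. The paper runs two independent contradiction arguments, each exploiting the linear order of $[0,\infty)$: one shows every class is selfadjoint (starting from the assumption $n\,d(y_n,z'_n)<d(y'_n,z_n)$), the other shows every selfadjoint metric is idempotent (starting from $d(y_n,z'_n)<\tfrac1n d(y_n,y'_n)$). You instead isolate a single statement, your Lemma~A, which is precisely the hypothesis of Theorem~\ref{projections} asserted for \emph{every} metric on the double rather than only for selfadjoint ones, and you observe that selfadjointness is then a purely formal consequence: applying Lemma~A to $d$ and to $d^*$ bounds both $d(x,x')$ and $d(y,y')$ by $\alpha+\beta\,d(x,y')$, and the triangle inequalities $d(y,x')\le d_X(x,y)+d(x,x')$ and $d_X(x,y)\le d(x,y')+d(y,y')$ then give $d^*\le 2\alpha+(1+2\beta)\,d$ on cross-distances, with the reverse bound by symmetry. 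This buys a cleaner structure: the one-dimensional geometry is used exactly once, in Lemma~A, whose contradiction argument is sound --- the near-minimizer $y_n$ of $\mu(x_n,\cdot')$ is forced to satisfy $|x_n-y_n|\ge\varphi_n-\delta_n$, the lower bound $y_n\ge x_n-a-\delta_n$ through the base point $0=\min X$ rules out $y_n$ lying far below $x_n$, and then $\mu(0,y'_n)$ is squeezed between $y_n-a$ and $x_n+\delta_n$, which is impossible once $\varphi_n>a+2\delta_n$. Your closing bookkeeping remark is also exactly right: Theorem~\ref{projections} requires $d^*=d$, so one must first establish selfadjointness of the class and only then pass to the symmetrization $\widehat d=\tfrac12(d+d^*)$, which preserves $x\mapsto d(x,x')$ and hence inherits Lemma~A, yielding $[d^2]=[\widehat d^{\,2}]=[\widehat d]=[d]$; commutativity then follows from the paper's result that selfadjoint idempotents commute.
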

\begin{proof}

First, let us show that any element of $M(X)$ is selfadjoint.
Suppose the contrary. Then there exists a metric $d$ on the double of $X$ such that $d^*$ is not equivalent to $d$, and for any $n\in\mathbb N$ we can find points $y_n,z_n\in X$ such that 
\begin{equation}\label{sparce1}
n\cdot d(y_n,z'_n)< d(y'_n,z_n).
\end{equation}
Since $d(X,X')>0$, the sequence $d(y'_n,z_n)$ is unbounded.  

Passing to a subsequence, if necessary, we may assume without loss of generality that $y_n<z_n$ for any $n\in\mathbb N$.
Then $d_X(x_0,z_n)=d_X(x_0,y_n)+d_X(y_n,z_n)$. 

By the triangle inequality, we have
\begin{equation}\label{sparce2}
d(y'_n,z_n)\leq 2d_X(y_n,z_n)+d(y_n,z'_n),
\end{equation}
so, (\ref{sparce1}) and (\ref{sparce2}) imply that
$$
n\cdot d(y_n,z'_n)< 2d_X(y_n,z_n)+d(y_n,z'_n),
$$
or, equivalently,
\begin{equation}\label{sparce3}
d(y_n,z'_n)<\frac{2}{n-1}d_X(y_n,z_n).
\end{equation}
Another application of the triangle inequality gives 
$$
d(y_n,z'_n)\geq d_X(x_0,z_n)-(d_X(y_n,x_0)+d(x_0,x'_0)).
$$ 
Combining this with (\ref{sparce3}), we get
\begin{equation}\label{sparce4}
d_X(x_0,z_n)-(d_X(y_n,x_0)+d(x_0,x'_0))<\frac{2}{n-1}d_X(y_n,z_n).
\end{equation}
By assumption, $d_X(x_0,z_n)=d_X(x_0,y_n)+d_X(y_n,z_n)$, so (\ref{sparce4}) implies that
$$
d_X(y_n,z_n)-d(x_0,x'_0)<\frac{2}{n-1}d_X(y_n,z_n)
$$
holds, hence the values $d_X(y_n,z_n)$ are uniformly bounded. Let $C$ satisfy $d_X(y_n,z_n)<C$ for any $n\in\mathbb N$.

By the triangle inequality and (\ref{sparce1}), we have
\begin{eqnarray*}
n(d(y_n,y'_n)-C)&<&n(d(y_n,y'_n)-d_X(y'_n,z'_n))\leq n d(y_n,z'_n)\\
&<&d(y'_n,z_n)\leq d(y_n,y'_n)+d_X(y_n,z_n)<d(y_n,y'_n)+C,
\end{eqnarray*}
hence $d(y_n,y'_n)<\frac{n+1}{n-1}C$, and the values $d(y_n,y'_n)$ are uniformly bounded.

Thus we get a contradiction: the left-hand side of the triangle inequality
$$
d(y'_n,z_n)\leq d(y_n,y'_n)+d_X(y_n,z_n)
$$
is unbounded, while both summands in the right-hand side are uniformly bounded.

Second, we have to show that any selfadjoint metric $d$ on the double of $X$ is an idempotent.
Suppose the contrary: for any $n\in\mathbb N$ there exist points $y_n,z_n\in X$ such that
\begin{equation}\label{s1}
d(y_n,z'_n)<\frac{1}{n}d(y_n,y'_n).
\end{equation}
Once again, we have two possibilities: either $d_X(x_0,z_n)=d_X(x_0,y_n)+d_X(y_n,z_n)$ or $d_X(x_0,y_n)=d_X(x_0,z_n)+d_X(z_n,y_n)$ for infinitely many numbers $n$'s, and let us assume that the first opportunity holds true. 

Then, by the triangle inequality, we have
$$
d_X(z_n,x_0)-(d_X(y_n,x_0)+d(x_0,x'_0))\leq d(y_n,z'_n),
$$
or, equivalently,
$$
d_X(y_n,z_n)-d(x_0,x'_0)\leq d(y_n,z'_n),
$$
which, together with (\ref{s1}), implies that
\begin{equation}\label{s2}
d_X(y_n,z_n)\leq d(y_n,z'_n)+d(x_0,x'_0)<\frac{1}{n}d(y_n,y'_n)+d(x_0,x'_0).
\end{equation}
Another triangle inequality combined with (\ref{s1}) and (\ref{s2}) gives
$$
d(y_n,y'_n)\leq d(y_n,z'_n)+d_X(y'_n,z'_n)=d(y_n,z'_n)+d_X(y_n,z_n)<\frac{2}{n}d(y_n,y'_n)+d(x_0,x'_0),
$$
which holds for infinitely many $n$'s. The latter may be true only if $d(y_n,y'_n)$ is bounded for these $n$'s, but this contradicts $d(X,X')>0$. Indeed, if $d(y_n,y'_n)<C$ for some $C>0$ and for infinitely many $n$'s then the sequence $d(y_n,z'_n)$ is not separated from 0. 

\end{proof}

\begin{example}
Let $X=\{(n,n,0):n\in\mathbb N\}\cup\{(n,-n,0):n\in\mathbb N\}\subset\mathbb R^3$ with the standard metric, and let
$X'=\{(x,-y,1):(x,y,0)\in X\}$. For $a_n=(n,n,0)\in X$ we have $a'_n=(n,-n,1)\subset X'$, and for
$b_n=(n,-n,0)\in X$ we have $b'_n=(n,n,1)\in X'$. 
Let the metric $d$ on the double of $X$ be inherited from the standard metric of $\mathbb R^3$.
Then $d(a_n,a'_n)=\sqrt{n^2+1}$, while $d(a_n,X')=d(a_n,b'_n)=1$, hence $[d]$ is selfadjoint, but not idempotent.  

\end{example}

\begin{example}
Let $X=\mathbb R$ with the standard metric, and let $A=[0,\infty)$, $B=(-\infty,0]$. For $x,y\in X$, set $d(x,y')=\left\lbrace\begin{array}{cl}|x+y|+1,&\mbox{if\ }x\in A,y\in B;\\|x|+|y|+1,&\mbox{otherwise.}\end{array}\right.$ 

Then $d^*d(x,y')=\inf_{z\in X}d(x,z')+d(y,z')$. If $x,y\in A$ then one may take $z=-x$, in this case $d(x,-x')+d(y,-x')=|x-y|+2$. In other cases one may take $z=0$, and $d^*d(x,y')=|x|+|y|+2$. Thus, $d^*d(x,y')=d_A(x)+1$, hence $[d^*d]=[d_A]$. Similary, we can see that $[dd^*]=[d_B]$. Thus, $[d]$ is a partial isometry from $[d_A]$ to $[d_B]$.

\end{example}

\section{Examples from extended metrics}

When $X$ is non-compact, the inverse semigroup $M(X)$ is infinite. Here we consider the case when metrics are replaced by the so-called extended metrics, which are the same as usual metrics, except that they are allowed to take infinite values. This gives a lot of examples with {\it finite} $M(X)$.

Note that setting $d(x,y')=\infty$ for any $x,y\in X$ gives the zero element $0\in M(X)$, as $d0=0d=0$ for any metric $d$ on the double of $X$.

\begin{example}
Let $X$ be a one-point space, $X=\{a\}$. Any two finite metrics on the double of $X$ are equivalent, but an infinite metric with $d(a,a')=\infty$ is not equivalent to a finite metric, so $M(X)=\{I,0\}$. We have $V_I=\{I,0\}$ and $V_0=\{0\}$. Then the $C^*$-algebra of $X$ is a subalgebra in the algebra $M_2(\mathbb C)$ of $2\times 2$ matrices, generated by the identity matrix and by a rank one projection, hence is isomorphic to $\mathbb C\oplus\mathbb C$. 

\end{example}

\begin{example}
Let $X$ be the space consisting of two points, $a$ and $b$, with $d_X(a,b)=\infty$. Any metric in $\mathcal M(X)$ is determined by the 4 values: $d(a,a')$, $d(a,b')$, $d(b,a')$ and $d(b,b')$. Metrics with any finite value are equivalent to those with this value equal to 1, so non-equivalent classes of metrics should take values 1 and $\infty$. Taking into account the triangle inequality, there are 7 possible metrics in $M(X)$:
\begin{enumerate}
\item
$0(a,a')=0(a,b')=0(b,a')=0(b,b')=\infty$;
\item
$I(a,a')=I(b,b')=1$, $I(a,b')=I(b,a')=\infty$;
\item
$p(a,a')=1$, $p(a,b')=p(b,b')=p(b,a')=\infty$;
\item
$q(b,b')=1$, $q(b,a')=q(a,a')=q(a,b')=\infty$;
\item
$u(a,b')=1$, $u(a,a')=u(b,a')=u(b,b')=\infty$;
\item
$u^*(b,a')=1$, $u^*(b,b')=u^*(a,b')=u^*(a,a')=\infty$;
\item
$s(a,b')=s(b,a')=1$, $s(a,a')=s(b,b')=\infty$.
\end{enumerate}

Note that $p,q$ are idempotents, $u$ and $u^*$ are partial isometries, $u^*u=p$, $uu^*=q$, and $s$ is a symmetry.
Let $L_0=\langle \delta_0\rangle$, $L_1=\langle\delta_p,\delta_{u^*}\rangle$, $L_2=\langle\delta_q,\delta_u\rangle$, $L_3=\langle\delta_I,\delta_s\rangle$. Then $V_0=L_0$, $V_p=V_u=L_1\oplus L_0$, $V_q=V_{u^*}=L_2\oplus L_0$, $V_I=V_s=L_0\oplus L_1\oplus L_2\oplus L_3$. 

We have $\lambda_d|_{L_0}=\id$ for any $d\in M(X)$, $\lambda_u(L_1)=L_2$, $\lambda_{u^*}(L_2)=L_1$ and $\lambda_s|_{L_1\oplus L_2}=\lambda_u|_{L_1\oplus L_2}+\lambda_{u^*}|_{L_1\oplus L_2}$, so $\lambda_u$, $\lambda_{u^*}$ and $\lambda_s$ restricted to the invariant subspace $L_1\oplus L_2$ generate the algebra isomorphic to $M_2(\mathbb C)$. Taking into account the invariant subspaces $L_0$ and $L_3$, where $M(X)$ acts by scalars, we get $C^*(M(X))\cong\mathbb C\oplus\mathbb C\oplus M_2(\mathbb C)\subset M_7(\mathbb C)$.

\end{example}

\section{Sufficient condition for an isomorphism $M(X)\cong M(Y)$}

Given two metric spaces, $X$ and $Y$, consider all metrics $d$ on the disjoint union $X\sqcup Y$ such that
\begin{itemize}
\item
$d|_X=d_X$, $d|_Y=d_Y$;
\item
$d(X,Y)\neq 0$.
\end{itemize}
Let $\mathcal M(X,Y)$ denote the set of all such metrics. 

Recall that, given a metric $d$ on $X\sqcup Y$, the Hausdorff distance between $X$ and $Y$ is $d_H(X,Y)=\max(\sup_{x\in X}d(x,Y),\sup_{y\in Y}d(y,X))$, and the Gromov--Hausdorff distance between $X$ and $Y$ is $\inf d_H(X,Y)$, where the infimum is over all metrics on $X\sqcup Y$ that equal $d_X$ and $d_Y$ on $X$ and $Y$, respectively. Note that the Gromov--Hausdorff distance may be (and often is) infinite.

\begin{lem}
The Gromov--Hausdorff distance between $X$ and $Y$ equals $\inf_{d\in \mathcal M(X,Y)}d_H(X,Y)$.

\end{lem}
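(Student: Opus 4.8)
The plan is to prove the two inequalities separately. One direction is immediate: every $d\in\mathcal M(X,Y)$ is in particular a metric on $X\sqcup Y$ with $d|_X=d_X$ and $d|_Y=d_Y$, so the infimum $\inf_{d\in\mathcal M(X,Y)}d_H(X,Y)$ is taken over a subfamily of the metrics that define the Gromov--Hausdorff distance, and is therefore $\ge$ the Gromov--Hausdorff distance.

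For the reverse inequality, I would take an arbitrary metric $d$ on $X\sqcup Y$ with $d|_X=d_X$, $d|_Y=d_Y$ (which need not lie in $\mathcal M(X,Y)$, since possibly $d(X,Y)=0$), fix $\varepsilon>0$, and define $d_\varepsilon$ by $d_\varepsilon|_X=d_X$, $d_\varepsilon|_Y=d_Y$ and $d_\varepsilon(x,y)=d(x,y)+\varepsilon$ for $x\in X$, $y\in Y$. The key step is to check that $d_\varepsilon$ is again a metric. Symmetry is clear, all cross distances are now $\ge\varepsilon>0$ (so $d_\varepsilon\in\mathcal M(X,Y)$), and triangles lying entirely in one copy are governed by $d_X$ or $d_Y$; hence the only triangle inequalities needing attention are those with vertices in both copies. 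For $x_1,x_2\in X$, $y\in Y$,
\[
d_X(x_1,x_2)=d(x_1,x_2)\le d(x_1,y)+d(y,x_2)\le d_\varepsilon(x_1,y)+d_\varepsilon(y,x_2),
\]
and
\[
d_\varepsilon(x_1,y)=d(x_1,y)+\varepsilon\le d_X(x_1,x_2)+d(x_2,y)+\varepsilon=d_X(x_1,x_2)+d_\varepsilon(x_2,y);
\]
the mirror cases, with two vertices in $Y$, are identical. Next, since $d_\varepsilon(x,Y)=\inf_{y\in Y}(d(x,y)+\varepsilon)=d(x,Y)+\varepsilon$ for every $x\in X$, and symmetrically $d_\varepsilon(y,X)=d(y,X)+\varepsilon$ for every $y\in Y$, the Hausdorff distance with respect to $d_\varepsilon$ equals the Hausdorff distance with respect to $d$ increased by $\varepsilon$. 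Since $d_\varepsilon\in\mathcal M(X,Y)$, it follows that $\inf_{\rho\in\mathcal M(X,Y)}d_H(X,Y)$ is at most the Hausdorff distance with respect to $d$, plus $\varepsilon$. Letting $\varepsilon\to0$ and then taking the infimum over all metrics $d$ on $X\sqcup Y$ that restrict to $d_X$ and $d_Y$ gives $\inf_{\rho\in\mathcal M(X,Y)}d_H(X,Y)\le$ the Gromov--Hausdorff distance, which together with the first inequality proves the lemma.

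I do not expect a serious obstacle here: the content of the statement is just that forbidding the degenerate case $d(X,Y)=0$ costs nothing in the infimum, since one can always pull the two copies uniformly apart by $\varepsilon$. The only points needing care are that this uniform shift preserves the triangle inequality and changes the Hausdorff distance by exactly $\varepsilon$, both verified above. Two trivial edge cases merit a remark: if the Gromov--Hausdorff distance is $+\infty$ the equality is automatic, and $\mathcal M(X,Y)$ is never empty, e.g.\ $d(x,y)=d_X(x,x_0)+1+d_Y(y_0,y)$ for fixed basepoints $x_0\in X$, $y_0\in Y$ is admissible.
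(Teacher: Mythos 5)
Your proof is correct and follows essentially the same route as the paper: both arguments note that the infimum over $\mathcal M(X,Y)$ is over a subfamily, and then recover the reverse inequality by adding $\varepsilon$ to all cross-distances and letting $\varepsilon\to 0$. Your write-up is actually more careful than the paper's (which asserts "it is clear that $d^\varepsilon$ is a metric"), since you verify the mixed triangle inequalities and that the Hausdorff distance shifts by exactly $\varepsilon$.
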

\begin{proof}
If $d$ is a metric on $X\sqcup Y$ that equals $d_X$ and $d_Y$ on $X$ and $Y$ respectively, and $d_H(X,Y)=0$ then, for any $\varepsilon>0$, set $d^\varepsilon|_X=d_X$, $d^\varepsilon|_Y=d_Y$, and $d^\varepsilon(x,y)=d(x,y)+\varepsilon$ for $x\in X$, $y\in Y$. It is clear that $d^\varepsilon$ is a metric in $\mathcal M(X,Y)$ and $d^\varepsilon_H(X,Y)\geq\varepsilon$, so it suffices to take the infimum over metrics for which the distance between $X$ and $Y$ is non-zero.  

\end{proof}

\begin{prop}
Suppose that the Gromov--Hausdorff distance between $X$ and $Y$ is finite. Then $M(X)$ and $M(Y)$ are isomorphic. 

\end{prop}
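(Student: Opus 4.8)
The plan is to choose one metric $d$ on $X\sqcup Y$ that witnesses the finiteness, and to transport metrics on the double of $X$ to metrics on the double of $Y$ by composing with $d$ on one side and with the adjoint $d^{*}$ on the other. The point is that the finiteness hypothesis is precisely what makes $d^{*}\circ d$ and $d\circ d^{*}$ equivalent to the units, so that this transport becomes an isomorphism with an explicit inverse.

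First I would invoke the lemma just established: since the Gromov--Hausdorff distance between $X$ and $Y$ is finite, there is a metric $d\in\mathcal M(X,Y)$ with $C:=d_{H}(X,Y)<\infty$, so that $d|_{X}=d_{X}$, $d|_{Y}=d_{Y}$, $d(X,Y)>0$, every $x\in X$ lies within $C$ of $Y$, and every $y\in Y$ lies within $C$ of $X$; let $d^{*}\in\mathcal M(Y,X)$ denote the adjoint, $d^{*}(y,x)=d(x,y)$. I would also note here that Lemma~\ref{Lemma_d}, the associativity of $\circ$, and the compatibility of $\circ$ with the quasi-isometry relation were all proved in Section~1 without using that the two spaces coincide, hence apply verbatim to metrics between distinct spaces, and I will use them freely. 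Then set $\Phi(\rho)=d\circ\rho\circ d^{*}$ for $\rho\in\mathcal M(X)$ and $\Psi(\sigma)=d^{*}\circ\sigma\circ d$ for $\sigma\in\mathcal M(Y)$. Two applications of Lemma~\ref{Lemma_d} make $\Phi(\rho)$ a metric on $Y\sqcup Y$, and unwinding the two infima gives
\[
\Phi(\rho)(y,z')=\inf_{x,w\in X}\bigl[d(x,y)+\rho(x,w')+d(w,z)\bigr]\ \geq\ \rho(X,X')>0,
\]
so $\Phi(\rho)\in\mathcal M(Y)$, and symmetrically $\Psi(\sigma)\in\mathcal M(X)$. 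Since $\circ$ respects $\sim$, both $\Phi$ and $\Psi$ descend to maps between the quotients $M(X)$ and $M(Y)$.

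The crux — and the only place the finiteness is used — is the claim that $d^{*}\circ d\sim I_{X}$ on the double of $X$ and $d\circ d^{*}\sim I_{Y}$ on the double of $Y$, where $I_{X},I_{Y}$ are the unit metrics of Example~\ref{I}. For the first, $(d^{*}\circ d)(x,z')=\inf_{y\in Y}\bigl[d(x,y)+d(z,y)\bigr]$; the triangle inequality in $(X\sqcup Y,d)$ gives the lower bound $d(x,y)+d(z,y)\geq d_{X}(x,z)=I_{X}(x,z')-1$, while picking $y_{0}\in Y$ with $d(x,y_{0})$ within $\varepsilon$ of $d(x,Y)\leq C$ and estimating $d(z,y_{0})\leq d_{X}(x,z)+d(x,y_{0})$ yields the upper bound $(d^{*}\circ d)(x,z')\leq d_{X}(x,z)+2C+2\varepsilon=I_{X}(x,z')+2C-1+2\varepsilon$. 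Letting $\varepsilon\to0$ shows $d^{*}\circ d\sim I_{X}$ (multiplicative constant $1$, additive constant of order $C$); the estimate for $d\circ d^{*}\sim I_{Y}$ is identical with $X$ and $Y$ exchanged.

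Finally I would assemble the isomorphism. Using associativity, the substitutions of $I_{X}$ for $d^{*}\circ d$ and $I_{Y}$ for $d\circ d^{*}$ (legitimate because $\circ$ respects $\sim$), and the fact that $I_{X},I_{Y}$ are units, one gets $\Psi(\Phi(\rho))=(d^{*}\circ d)\circ\rho\circ(d^{*}\circ d)\sim I_{X}\circ\rho\circ I_{X}=\rho$, so $\Psi\circ\Phi=\id$ on $M(X)$, and symmetrically $\Phi\circ\Psi=\id$ on $M(Y)$; moreover
\[
\Phi(\rho_{1}\circ\rho_{2})=d\circ\rho_{1}\circ\rho_{2}\circ d^{*}\sim d\circ\rho_{1}\circ(d^{*}\circ d)\circ\rho_{2}\circ d^{*}=\Phi(\rho_{1})\circ\Phi(\rho_{2}),
\]
so $\Phi$ is a semigroup homomorphism, and being bijective with inverse $\Psi$ it is an isomorphism $M(X)\cong M(Y)$. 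Since $(d\circ\rho\circ d^{*})^{*}=d\circ\rho^{*}\circ d^{*}$, the map $\Phi$ also intertwines the involutions, hence induces an isomorphism of the associated inverse semigroup $C^{*}$-algebras, which is the form in which the result is stated in the abstract. I expect the only genuine care to be needed in the crux step: keeping track of which copy of each space is which through the double composition defining $\Phi$, together with checking once and for all that the composition calculus of Section~1 really does make sense for metrics between two distinct spaces.
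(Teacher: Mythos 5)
Your proposal is correct and follows essentially the same route as the paper: conjugation by a fixed metric $\rho\in\mathcal M(X,Y)$ witnessing the finite Gromov--Hausdorff distance, with the key point that $\rho^*\rho$ and $\rho\rho^*$ are equivalent to the respective units. The only cosmetic difference is that you verify $d^*\circ d\sim I_X$ by a direct two-sided estimate, whereas the paper bounds the diagonal values $\rho^*\rho(x,x')$ and appeals to the idempotent characterization; both are fine.
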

\begin{proof}
By assumption, there exists $\rho\in\mathcal M(X,Y)$ and $C>0$ such that $\rho(x,Y)<C$ and $\rho(y,X)$ for any $x\in X$ and any $y\in Y$. Then $\rho^*\rho\in\mathcal M(X)$, and $\rho^*\rho(x,x')=\inf_{z\in X}2\rho(x,z')<2C$ for any $x\in X$, hence $\rho^*\rho\sim I$, where $I\in\mathcal M(X)$ is defined in Example \ref{I}. Similarly, $\rho\rho^*\sim I$ in $\mathcal M(Y)$. 

For $d\in\mathcal M(X)$, $b\in\mathcal M(Y)$, set $\varphi(d)=\rho d\rho^*\in\mathcal M(Y)$, $\psi(b)=\rho^* b\rho$.  Clearly, $\varphi$ and $\psi$ pass to maps $\bar\varphi:M(X)\to M(Y)$ and $\bar\psi:M(Y)\to M(X)$, respectively. These maps are semigroup homomorphisms, as $[\rho^*\rho]$ and $[\rho\rho^*]$ are the unit elements in $M(X)$ and in $M(Y)$, respectively.

Finally, $\psi\circ\varphi(d)=\rho^*\rho d\rho^*\rho\sim d$, hence $\bar\psi\circ\bar\varphi=\id_{M(X)}$. Similarly, $\bar\varphi\circ\bar\psi=\id_{M(Y)}$.

\end{proof}

\section{Subgroup of invertibles}

An element $[d]\in M(X)$ is invertible if $[d^*d]=[dd^*]=[I]$. It is clear that the invertible elements form a group. Here we describe this group.

\begin{defn}\label{Def-isom}
A map $f:X\to X$ is an almost isometry if 
\begin{itemize}
\item[(i1)]
there exists $C>0$ such that 
$$
d_X(x,\tilde x)-C\leq d_Y(f(x),f(\tilde x))\leq d_X(x,\tilde x)+C
$$
for any $x,\tilde x\in X$;
\item[(i2)]
there exist a map $g:X\to X$ and $D>0$ such that $d_X(g\circ f(x),x)<D$ and $d_X(f\circ g(x),x)<D$ for any $x\in X$.
\end{itemize}

\end{defn}

Note that if such a map $g$ exists then it automatically satisfies (i1), possibly with different $C$.

Any isometry is patently an almost isometry. Another example of an almost isometry for $X=\Gamma$, where $\Gamma$ is a finitely generated group with the word-length metric, is provided by conjugation by a fixed element $g\in\Gamma$.

Given an almost isometry $f:X\to X$, set
$$
d^f(x,y')=\inf_{z\in X}d_X(x,z)+C+d_X(f(z),y),\quad x,y\in X.
$$
It was shown in \cite{Manuilov-Morphisms} that $d^f$ is a metric (one has to check four triangle inequalities).

If $f,g:X\to X$ are almost isometries as in Definition \ref{Def-isom} then (i2) implies that $[d^fd^g]=[d^gd^f]=[I]$.

\begin{prop}
Let $d\in\mathcal M(X)$ and let $[d]\in M(X)$ be invertible. Then there exists an almost isometry $f$ of $X$ such that $[d^f]=[d]$. 

\end{prop}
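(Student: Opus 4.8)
The plan is to extract the almost isometry directly from an invertible metric $d$ by "tracking where the infimum is achieved." Since $[dd^*]=[d^*d]=[I]$, Theorem~\ref{projections} applied to the selfadjoint idempotents $[d^*d]$ and $[dd^*]$ (and the fact that $[I](x,x')=d_X(x,x)+1=1$, so the equivalence forces uniform bounds) gives constants $\alpha\ge 0$, $\beta\ge 1$ with $d^*d(x,x')\le\alpha$ and $dd^*(x,x')\le\alpha$ for all $x\in X$. Unwinding, $\inf_{z\in X}[d(x,z')+d(z,x')]\le\alpha$ and $\inf_{z\in X}[d^*(x,z')+d^*(z,x')]\le\alpha$, i.e. for each $x$ there is $z$ with $d(x,z')$ small (bounded by $\alpha$). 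So: for each $x\in X$ choose $f(x)\in X$ with $d(x,f(x)')\le d(x,X')+1\le\alpha+1$, and symmetrically choose $g(x)\in X$ with $d^*(x,g(x)')=d(g(x),x')\le\alpha+1$.

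The next step is to verify (i1) for $f$. For $x,\tilde x\in X$, the triangle inequality in the double gives
\[
d_X(f(x),f(\tilde x))=d_X(f(x)',f(\tilde x)')\le d(f(x)',x)+d_X(x,\tilde x)+d(\tilde x,f(\tilde x)')\le d_X(x,\tilde x)+2(\alpha+1),
\]
which is the upper bound with $C=2(\alpha+1)$. For the lower bound one uses invertibility in the other direction: from $[dd^*]=[I]$ one similarly gets, for each $y\in X$, a point $h(y)$ with $d(h(y),y')\le\alpha+1$ where $h$ plays the role of an approximate inverse; then the inequality $d_X(x,\tilde x)\le d_X(f(x),f(\tilde x))+C'$ follows by a parallel triangle estimate once one checks that $h\circ f$ moves points a bounded amount. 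Concretely, $d_X(hf(x),x)$: we have $d(hf(x),f(x)')\le\alpha+1$ and $d(x,f(x)')\le\alpha+1$, so $d_X(hf(x),x)\le d(hf(x),f(x)')+d(f(x)',x)\le 2(\alpha+1)$, using that two points of the first copy at bounded distance from the same point $f(x)'$ of the second copy are at bounded distance from each other. This simultaneously establishes (i2): take the map $g$ above, check $d_X(gf(x),x)<D$ and $d_X(fg(x),x)<D$ for $D=2(\alpha+1)$ by the same "common second-copy point" trick, now comparing $d^*d(x,x')\le\alpha$ (which says $\inf_z[d(x,z')+d(z,x')]$ small) against the specific choices $z=f(x)$ and $z=g(x)$.

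Once $f$ is an almost isometry, it remains to show $[d^f]=[d]$, where by definition $d^f(x,y')=\inf_{z\in X}[d_X(x,z)+C+d_X(f(z),y)]$. Taking $z=x$ gives $d^f(x,y')\le C+d_X(f(x),y)\le C+d(f(x),x)'+\ldots$ — more usefully, $d_X(f(x),y)=d_X(f(x)',y')\le d(f(x)',x)+d(x,y')\le(\alpha+1)+d(x,y')$, so $d^f(x,y')\le C+\alpha+1+d(x,y')$, i.e. $d$ dominates $d^f$. For the reverse, given any $z\in X$, the triangle inequality gives $d(x,y')\le d(x,f(z)')+d(f(z)',y)\le\big(d(x,z')+\text{(wobble of }d\text{ vs.\ }f)\big)+d_X(f(z),y)$; here one bounds $d(x,f(z)')\le d(x,z')+d_X(z',f(z)')+\ldots$ — cleaner: $d(x,f(z)')-d(x,z')\le d_X(f(z)',?)$ is not directly available, so instead bound $d(x,f(z)')$ using $d(z,f(z)')\le\alpha+1$: $d(x,f(z)')\le d(x,z)+d(z,f(z)')\le d_X(x,z)+\alpha+1$. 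Hence $d(x,y')\le d_X(x,z)+\alpha+1+d_X(f(z),y)\le d^f(x,y')+\alpha+1-C+C$, and taking the infimum over $z$ gives $d(x,y')\le d^f(x,y')+\alpha+1$. Thus $d\sim d^f$.

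\emph{Main obstacle.} The delicate point is the lower bound in (i1) (equivalently, controlling the approximate inverse): it genuinely requires invoking invertibility in \emph{both} directions and chaining estimates through the double metric using the "two points close to a common point in the other copy are close" principle repeatedly. One must be careful that the approximate inverse $h$ produced from $[dd^*]=[I]$ is compatible with the $g$ produced from $[d^*d]=[I]$ up to bounded wobble — but since any two approximate inverses of $f$ differ by a bounded amount, and $f$ is a bijection up to bounded error, this reconciliation is automatic. The rest is routine triangle-inequality bookkeeping in $X\sqcup X$.
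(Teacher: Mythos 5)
Your proposal is correct and follows essentially the same route as the paper: extract $f$ and $g$ as near-minimizers of $d(x,\cdot')$ and $d(\cdot,x')$ from the uniform bounds that $[d^*d]=[dd^*]=[I]$ forces, verify (i1)--(i2) by triangle inequalities in the double using the ``two points close to a common point in the other copy'' principle, and then compare $d^f$ with $d$ by the same kind of estimates (your lower bound $d(x,y')\leq d_X(x,z)+d(z,f(z)')+d_X(f(z),y)$ is in fact slightly more direct than the paper's, which routes through $g$). The only blemishes are cosmetic: the ``unwound'' expression $\inf_z[d(x,z')+d(z,x')]$ is $d^2(x,x')$ rather than $d^*d(x,x')=2d(x,X')$, though the conclusion you draw from it is the correct one, and the detour through an auxiliary $h$ for the lower bound in (i1) is unnecessary since a reverse triangle inequality through $f(x)'$ and $f(\tilde x)'$ suffices.
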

\begin{proof}
If $[d]$ is invertible then $[d^*d]=[dd^*]=[I]$, so there exists $C>0$ such that $\inf_{z\in X}[d(x,z')+d(z',x)]<C$ and $\inf_{z\in X}[d(x',z)+d(z,x')]<C$ for any $x\in X$. Therefore there exist $u,v\in X$ such that $d(x,u')<C/2$ and $d(x',v)<C/2$.

Set $f(x)=u$, $g(x)=v$. Then $d(x,f(x)')<C/2$ and $d(x',g(x))<C/2$ for any $x\in X$, hence $d(f(x)',g(f(x)))<C/2$, and, by the triangle inequality, 
$$
d_X(x,g\circ f(x))\leq d(x,f(x)')+d(f(x)',g(f(x)))<C. 
$$
Similarly one gets $d_X(x,f\circ g(x))<C$. 

Let $x,\tilde x\in X$. Then, by the triangle inequality,
$$
d_X(f(x),f(\tilde x))\leq d(f(x),x')+d_X(x',\tilde x')+d(\tilde x',f(\tilde x))\leq d_X(x,\tilde x)+C
$$
and
$$
d_X(f(x),f(\tilde x))\geq -d(f(x),x')+d_X(x',\tilde x')-d(\tilde x',f(\tilde x))\geq d_X(x,\tilde x)-C,
$$
hence $f$ is an almost isometry.

It remains to check that $[d^f]=[d]$. Taking $z=x$ and using the triangle inequality, we get
\begin{eqnarray*}
d^f(x,y')&=&\inf_{z\in X}[d_X(x,z)+d_X(f(z),y)+C]\leq d_X(f(x),y)+C=d_X(f(x)',y')+C\\
&\leq&d(f(x)',x)+d(x,y')+C\leq d(x,y')+3C/2.
\end{eqnarray*}

To prove the estimate from below, note that
\begin{eqnarray*}
d_X(f(z),y)&\geq& d_X(g\circ f(z),g(y))-C/2\geq d_X(z,g(y))-d_X(g\circ f(z),z)-C/2\\
&\geq& d_X(z,g(y))-C-C/2=d_X(z,g(y))-3C/2,
\end{eqnarray*}
hence, by the triangle inequality,
\begin{eqnarray*}
d^f(x,y')&=& \inf_{z\in X}[d_X(x,z)+d_X(f(z),y)+C]\geq \inf_{z\in X}[d_X(x,z)+d_X(z,g(y))]-C/2\\
&\geq& d_X(x,g(y))-C/2\geq d(x,y')-d(y',g(y))-C-C/2\\
&\geq& d(x,y')-3C/2.
\end{eqnarray*}

\end{proof}

\section{Coarse version}

Two metrics, $d_1$, $d_2$, on $X$ are coarsely equivalent if there exists a monotonely increasing function $f$ on $[0,\infty)$ with 
$\lim_{t\to\infty}f(t)=\infty$ such that
$$
f^{-1}(d_2(x,y))\leq d_1(x,y)\leq f(d_2(x,y))
$$
for any $x,y\in X$. 

All our results hold also for the coarse equivalence classes of metrics on the double of $X$. This gives a smaller quotient inverse semigroup $M_c(X)$ of coarse equivalence classes. We may also use the fact that the image of an inverse semigroup, under a semigroup homomorphism, is an inverse semigroup.

\section*{}
\textbf{Acknowledgment.} The author expresses his gratitude to the referees for valuable comments and for suggested nicer versions of certain proofs.

\end{document}